\theoremstyle{plain}
\newtheorem{thm}{Theorem}[section]
\newtheorem{prop}[thm]{Proposition}
\newtheorem{lem}[thm]{Lemma}
\newtheorem{rem}[thm]{Remark}
\newtheorem{prob}[thm]{Problem}
\def\vol{\mathop{\mathrm{Vol}}\nolimits}
\newcommand{\barpartial}{{\overline \partial}}
\newcommand{\mapright}[1]{\smash{\mathop{   \hbox to 0.7cm{\rightarrowfill}}
  \limits^{#1}}}
\renewcommand{\emph}[1]{{\color{red} \it #1}}
\definecolor{orange}{cmyk}{0, 0.7, 1, 0}
\definecolor{light-green}{cmyk}{0.5, 0, 0.5, 0}
\definecolor{light-blue}{cmyk}{0.5, 0, 0, 0}
\definecolor{light-yellow}{cmyk}{0,0,0.6,0}
\definecolor{dark-green}{cmyk}{0.7, 0, 0.7, 0.5}
\title{Volume minimization and Conformally K\"ahler, Einstein-Maxwell geometry}
\author{Akito Futaki and Hajime Ono}
\address{Graduate School of Mathematical Sciences, The University of Tokyo, 3-8-1 Komaba Meguro-ku Tokyo 153-8914, Japan}
\email{afutaki@ms.u-tokyo.ac.jp}
\address{Department of Mathematics, Saitama University, 255 Shimo-Okubo, Sakura-Ku,
Saitama 380-8570, Japan}
\email{hono@rimath.saitama-u.ac.jp}
\date{March 22, 2017}
\begin{document}
\begin{abstract} Let $M$ be a compact complex manifold admitting a K\"ahler structure.
A conformally K\"ahler, Einstein-Maxwell metric (cKEM metric for short) is a 
Hermitian metric $\Tilde g$ on $M$ with constant scalar curvature such that there is a 
positive smooth function $f$
with $g = f^{2} \Tilde g$ being a K\"ahler metric and $f$ being a Killing Hamiltonian potential with respect to $g$.
Fixing a K\"ahler class, we characterize such Killing vector fields whose Hamiltonian function $f$ with
respect to some K\"ahler metric $g$ in the fixed K\"ahler class gives a cKEM metric
$\Tilde g = f^{-2}g$. The characterization is described 
in terms of critical points of certain volume functional. The conceptual idea is similar
to the cases of K\"ahler-Ricci solitons and Sasaki-Einstein metrics in that the derivative of the volume
functional gives rise to a natural obstruction to the existence of cKEM metrics. 
However, unlike the K\"ahler-Ricci soliton case
and Sasaki-Einstein case, the functional is neither
convex nor proper in general, and often has more than one critical points. The last observation matches well with the 
ambitoric examples studied earlier by LeBrun and Apostolov-Maschler. 
\end{abstract}

\maketitle

\section{Introduction.}

Let $(M,J)$ be a compact K\"ahler manifold. We call a Hermitian metric
$\Tilde{g}$ on $(M,J)$  
a conformally K\"ahler, Einstein-Maxwell metric
(cKEM metric for short) if it satisfies the following three conditions:

(a) There exists a positive smooth function $f$ on $M$  such that 
$g=f^2\Tilde{g}$  is K\"ahler.

(b) The Hamiltonian vector field $K=J\mathrm{grad}_gf$
 is Killing for both $g$ and $\Tilde{g}$.

(c)  The scalar curvature $s_{\Tilde{g}}$ of $\Tilde g$ is constant.

\noindent
Since the Ricci tensors $\mathrm{Ric}_g$ and 
$\mathrm{Ric}_{\Tilde g}$
of $g$ and $\Tilde g$ are related by
$$ \mathrm{Ric}_{\Tilde g\, 0} = \mathrm{Ric}_{g\, 0} + 2f^{-1} \mathrm{Hess}_0 f$$
where $[\ \ ]_0$ denotes (throughout this paper) the trace free part 
(c.f. (1.161b) in Besse \cite{B}),
the condition (b) is equivalent to

(b') $\mathrm{Ric}_{\Tilde{g}}(J\cdot, J\cdot)=\mathrm{Ric}_{\Tilde{g}}
(\cdot,\cdot)$. 

\noindent
The condition (c) is equivalent to 
\begin{equation}\label{eq:3.1}
s_{\Tilde{g}}=2\left(\dfrac{2m-1}{m-1}\right)f^{m+1}\Delta_g
\left(\frac{1}{f}\right)^{m-1}+s_gf^2=\text{const},
\end{equation}
where $\dim_{\mathbf{C}}M=m$.

If $\Tilde g$ is a cKEM metric and if $\dim_{\mathbf R} = 4$, then one obtains a solution 
$(M, h, F)$ of the following Einstein-Maxwell equation studied in General Relativity 
(see LeBrun \cite{L1}):

(i) $h$ is a Riemannian metric. (In our case $h = \Tilde g$).

(ii) $F$ is a real 2-form.

(iii) $dF = 0$, $d\ast F=0$, $[\mathrm{Ric} + F\circ F]_0 = 0$. Here $(F\circ F)_{jk} = 
F_j{ }^\ell F_{\ell k}$.
(In our case $F^+$ is the K\"ahler form 
$\omega_g$, and $F^- = \frac12 f^{-2} \rho_0(\Tilde g)$ with $\rho_0(\Tilde g)$ the traceless Ricci form of 
$\Tilde g$.)

Except for the constant scalar curvature K\"ahler (cscK for short) metrics in which case $f$ is a 
constant function, not many examples are known.
The most well-known examples may be the conformally K\"ahler Einstein metrics by Page \cite{Page78} 
on the one-point-blow-up
of $\mathbf C\mathbf P^2$, by Chen-LeBrun-Weber \cite{ChenLeBrunWeber} on the two-point-blow-up
of $\mathbf C\mathbf P^2$. Further examples are the ones 
by Apostolov-Calderbank-Gauduchon \cite{ACG16}, \cite{ACG15} on 4-orbifolds and 
 by B\'erard-Bergery \cite{BB82} on $\mathbf P^1$-bundles over Fano K\"ahler-Einstein manifolds. 
In the more recent studies, non-Einstein cKEM examples are constructed by LeBrun \cite{L1}, \cite{L2} 
showing that there are ambitoric examples on $\mathbf C\mathbf P^1 \times 
\mathbf C\mathbf P^1$ and the  one-point-blow-up
of $\mathbf C\mathbf P^2$, 
and by Koca-T{\o}nnesen-Friedman \cite{KT} on ruled surfaces of higher genus.

In \cite{AM}, Apostolov and Maschler initiated a study in the framework similar to the K\"ahler geometry, 
and set the existence problem of cKEM metrics in the Donaldson-Fujiki picture \cite{donaldson97}, \cite{fujiki92}. 
In particular, fixing a 
K\"ahler class, they defined an obstruction to the existence of cKEM metrics in a similar manner to
the K\"ahler-Einstein and cscK cases \cite{futaki83.1}, \cite{futaki83.2}. They further studied the toric surfaces
and showed the equivalence between the existence of cKEM metrics and toric K-stability 
on toric surfaces with convex quadrilateral moment map images, 
extending earlier
works by Legendre \cite{Legendre11} and Donaldson \cite{donaldson02}, \cite{donaldson09GAFA}.
We remark that Lichnerowicz-Matsushima reductiveness theorem for cscK manifolds is also extended to the
cKEM manifolds by Lahdili \cite{Lahdili17} and us \cite{FO_reductive17} independently.

The purpose of the present paper is to study for which Killing vector field we can find a cKEM metric.
We show that, fixing a K\"ahler class, such Killing vector fields are critical points of certain volume functional.
We also show that, for toric manifolds, this idea gives an efficient way to decide which vector fields in the Lie algebra
of the torus can have a solution of the cKEM problem. The idea is similar to the cases of K\"ahler-Ricci solitons
and Sasaki-Einstein metrics, so let us digress to these two cases. A K\"ahler-Ricci soliton is a K\"ahler metric
with its K\"ahler form $\omega \in c_1(M)$ such that there exists a Killing Hamiltonian vector field $X$ in the Lie algebra
$\mathfrak h$ 
of the maximal torus of the automorphism group such that
\begin{eqnarray*}
\rho_\omega &=& \omega + L_{JX }\omega\\
&=& \omega + i\partial\barpartial f_X
\end{eqnarray*}
where $\rho_\omega$ is the Ricci form of $\omega$ and $f_X$ is the Hamiltonian function of $X$.
To find such $X$ that there is a K\"ahler form $\omega$ satisfying the K\"ahler-Ricci soliton equation, let $g$ be an arbitrary
K\"ahler metric with its K\"ahler class $\omega_g \in c_1(M)$, and let $h_g$ be a smooth function such that
$$ \rho_g - \omega_g = \partial\barpartial h_g.$$
Tian and Zhu defined in \cite{TZ02} a functional $F_X : \mathfrak h \to \mathbf R$ by
$$ F_X(Y) = \int_M (JY)(h_g - f_X) e^{f_X} \omega_g^m$$
where $f_X$ is the Hamiltonian function of $X$ with the normalization $\int_M e^{f_X} \omega_g^m = \int_M \omega_g^m$.
$F_X$ is independent of the choice of $\omega_g \in c_1(M)$, and if there exists a K\"ahler-Ricci soliton for $X$ then
$F_X$ vanishes identically. To find such $X$ with vanishing $F_X$, they considered the weighted volume functional
$V : \mathfrak h \to \mathbf R$ defined by
$$ V(Z) = \int_M e^{u_Z} \omega_g^m$$
where $u_Z$ is the Hamiltonian function of $Z \in \mathfrak h$ 
with the the normalization $\int_M u_Z e^{h_g}\omega_g^m = 0$.
They showed that $V$ is independent of $\omega_g$, that $dV_X(Y) = c\, F_X(Y)$ with a constant $c$, that
$V$ is a strictly convex proper function, and that there is a unique minimum $X$. This minimum $X$ is the right choice
to solve the K\"ahler-Ricci soliton equation.

Let us turn to the Sasaki-Einstein metrics. An odd dimensional Riemannian manifold $S$ is said to be a Sasakian manifold
if its Riemannian cone manifold $C(S)$ is a K\"ahler manifold, and a Sasakian manifold $S$ is said to be a Sasaki-Einstein manifold if
$S$ is also an Einstein manifold. A fundamental fact is that $S$ is Sasaki-Einstein if and only if its cone
$C(S)$ is a Ricci-flat K\"ahler manifold, and also if and only if the local leaf spaces of the $1$-dimensional foliation
generated by the Reeb vector field $J(r\frac{\partial}{\partial r})$ have K\"ahler-Einstein metrics where $r$ denotes 
the radial coordinate on the cone $C(S)$. There is an obstruction to the existence of Sasaki-Einstein metrics
similar to K\"ahler-Einstein metrics \cite{FOW}, \cite{BGS}. 
Fixing a holomorphic structure of the cone $C(S)$, 
a natural deformation space of Sasakian structures is
the deformation space of the cone structures of $C(S)$. If such a deformation is given by
$r \mapsto r^{\prime} = re^{\varphi}$ then we have a deformation of the Reeb vector field
$J(r\frac{\partial}{\partial r}) \mapsto J(r^{\prime}\frac{\partial}{\partial r^\prime})$. Thus, this deformation can be 
regarded as a deformation of the Reeb vector fields. Let us define the volume functional 
$V : \mathrm{KCS}(C(S),J) \to \mathbf R$ on the space $\mathrm{KCS}(C(S),J)$ of the K\"ahler cone structures with a fixed 
holomorphic structure $J$ by $V(S,g) = \mathrm{vol}(S,g)$ where $S = \{r=1\}$ is the Sasakian manifold
determined by the K\"ahler cone structure. 
Denote by $G$ the maximal torus of the group of automorphisms commuting with the flow generated by 
$r\frac{\partial}{\partial r}$. 
Martelli-Sparks-Yau \cite{MSY2} showed that
the derivative $dV_{(S,g)}$ 
gives rise to a linear function on $\mathrm{Lie}(G)$ which coincides with the obstruction to the existence of Sasaki-Einstein metrics mentioned above. 
They further showed that, when $S$ is toric (meaning $C(S)$ is toric), 
the volume functional $V$ restricted to the space of toric deformations (meaning deformations of the Reeb vector field
in the Lie algebra of the torus) is a strictly convex proper function, and the unique minimum is the right choice of
the Reeb vector field, i.e. the right choice of the Sasakian structure to solve the Sasaki-Einstein equation
because this minimum assures the vanishing of the obstruction.
For this choice we can always find a Sasaki-Einstein metric \cite{FOW}, \cite{CFO}. See also the survey articles \cite{FO08}, 
\cite{Sparks11}.

To be more precise in our cKEM problem, let $G$ be a maximal torus of a
maximal reductive subgroup of the automorphism group, and take $K \in \mathfrak g := \mathrm{Lie}(G)$.
Let $\omega_0$ be a K\"ahler form,
and $\Omega = [\omega_0] \in H^2_{\mathrm{DR}}(M,\mathbf R)$ be a fixed K\"ahler class. The problem is to
find a $G$-invariant K\"ahler metric $g$ with its K\"ahler form $\omega_g \in \Omega$ such that

(i) $\Tilde g = f^{-2} g$ is a cKEM metric,

(ii) $J\mathrm{grad}_{g} f = K$.

\noindent
Denote by
$\mathcal K_\Omega^G$ the space of $G$-invariant K\"ahler metrics $g$ with $\omega_g
\in \Omega$. For any $(K,a,g)\in \mathfrak g\times \mathbf{R}\times
\mathcal K_\Omega^G$, there exists a unique function $f_{K,a,g}
\in C^\infty(M,\mathbf{R})$
satisfying the following two conditions:
\begin{equation}
\iota_K\,\omega_g=-df_{K,a,g},\ \ 
\int_Mf_{K,a,g}\frac{\omega_g^m}{m!}=a.
\end{equation}
Noting $\min\{f_{K,a,g}\,|\,x \in M\}$ is independent of $g$ with $\omega_g \in \Omega$ (see section 2), 
we put
\begin{eqnarray}
\mathcal P_\Omega^G&:=&\{(K,a)\in \mathfrak g\times \mathbf{R}\,|\, f_{K,a,g} > 0\}, \label{eq:0.1}\\
\mathcal H_\Omega^G&:=&\left\{\Tilde{g}_{K,a}=\frac{1}{f_{K,a,g}^2}g\,
\Big|\,
(K,a)\in \mathcal P_\Omega^G,g\in \mathcal K_\Omega^G\right\}.\label{eq:0.2}
\end{eqnarray}
Hereafter the K\"ahler metric $g$ and its K\"ahler form $\omega_g$ are often identified, and $\omega_g$ is
often denoted by $\omega$.
Fixing $(K,a) \in P^G_{\Omega}$, put
\begin{equation}\label{eq:0.3}
\mathcal H_{\Omega,K,a}^G : =\{\Tilde{g}_{K,a}\,|\,
g \in \mathcal K_{\Omega}^G\} 
\end{equation}
and 
\begin{equation}\label{eq:0.4}
d_{\Omega,K,a}:=
\dfrac{S(\Tilde{g}_{K,a})}{\vol(\Tilde{g}_{K,a})}
=\dfrac
{\displaystyle{\int_Ms_{\Tilde{g}_{K,a}}\left(\frac{1}{f_{K,a,g}}\right)^{2m}
\frac{\omega^m}{m!}}}
{\displaystyle{\int_M\left(\frac{1}{f_{K,a,g}}\right)^{2m}
\frac{\omega^m}{m!}}}.
\end{equation}
Then $d_{\Omega,K,a}$ 
is a constant independent of the choice of $g\in \mathcal K_\Omega^G$ as shown in
\cite{AM}. Let us put further 

\begin{equation}\label{eq:0.5}
\Tilde{\mathcal P}_\Omega^G:=\left\{(K,a)\in \mathcal P_\Omega^G\,\Big|\,
d_{\Omega,K,a}=1\right\}.
\end{equation}

The main result in this paper is the following volume minimization property of cKEM metrics.

\begin{thm}\label{main theorem}
Let $(K,a)\in \Tilde{\mathcal P}_\Omega^G$. Then if there exists a conformally K\"ahler,
Einstein-Maxwell metric $\Tilde{g}_{K,a}\in \mathcal H_{\Omega,K,a}^G$ then $(K,a)$ 
is a critical point of 
$\vol:
\Tilde{\mathcal P}_{\Omega}^G\to \mathbf{R}$
given by $\vol(K,a):=\vol(\Tilde{g}_{K,a})$ for $(K,a)\in 
\Tilde{\mathcal P}_\Omega^G$.
Further, $(K,a)$ 
is a critical point of 
$\vol:
\Tilde{\mathcal P}_{\Omega}^G\to \mathbf{R}$
if and only if 
$\mathfrak F_{\Omega,K,a}^G\equiv 0$.
\end{thm}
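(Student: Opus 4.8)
The plan is to realize $\widetilde{\mathcal P}_\Omega^G$ as a smooth hypersurface cut out by the normalization $d_{\Omega,K,a}=1$, to compute the differential of $\vol$ along it, and to match the resulting constrained derivative with the obstruction $\mathfrak F^G_{\Omega,K,a}$ via a Lagrange--multiplier bookkeeping that is made rigid by a scaling (Euler) identity. First I would fix one $g\in\mathcal K_\Omega^G$ and use that $(K,a)\mapsto f_{K,a,g}$ is affine: for a tangent vector $(\dot K,\dot a)\in\mathfrak g\times\mathbf R$ the variation of $f:=f_{K,a,g}$ is again a normalized potential, $\dot f=f_{\dot K,\dot a,g}$, characterized by $\iota_{\dot K}\omega=-d\dot f$ and $\int_M\dot f\,\omega^m/m!=\dot a$. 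Since $\vol(K,a)=\int_M f^{-2m}\,\omega^m/m!$ is a function of $(K,a)$ alone (Section~2), differentiating under the integral sign gives
\[
d\vol_{(K,a)}(\dot K,\dot a)=-2m\int_M f^{-2m-1}\,\dot f\,\frac{\omega^m}{m!}.
\]
I would then record the two scaling identities that drive everything: under $(K,a)\mapsto(\lambda K,\lambda a)$ one has $f\mapsto\lambda f$, hence $\widetilde g_{K,a}\mapsto\lambda^{-2}\widetilde g_{K,a}$, so that $\vol$ is homogeneous of degree $-2m$ and the function $D(K,a):=d_{\Omega,K,a}$ (the $\widetilde g$-average of $s_{\widetilde g_{K,a}}$) is homogeneous of degree $2$ in $(K,a)$; the latter is seen directly from the formula \eqref{eq:3.1}, where $s_{\widetilde g}\mapsto\lambda^2 s_{\widetilde g}$.

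By Euler's relation these give $d\vol_{(K,a)}(K,a)=-2m\,\vol$ and $dD_{(K,a)}(K,a)=2D(K,a)=2$ on $\widetilde{\mathcal P}_\Omega^G$. In particular the radial direction $(K,a)$ is transverse to the level set $\{D=1\}$, so near $(K,a)$ the set $\widetilde{\mathcal P}_\Omega^G$ is a smooth hypersurface with tangent space $T=\ker dD_{(K,a)}$, and $\mathfrak g\times\mathbf R=\mathbf R\cdot(K,a)\oplus T$. I would then introduce the Lagrange-corrected functional
\[
\mathcal L(\dot K,\dot a):=d\vol_{(K,a)}(\dot K,\dot a)+m\,\vol\cdot dD_{(K,a)}(\dot K,\dot a),
\]
where the multiplier $m\,\vol$ is chosen precisely so that $\mathcal L(K,a)=-2m\,\vol+m\,\vol\cdot 2=0$. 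Thus $\mathcal L$ vanishes on the radial line automatically, while on $T$ it agrees with $d\vol_{(K,a)}$. Consequently $(K,a)$ is a critical point of $\vol|_{\widetilde{\mathcal P}_\Omega^G}$, i.e. $d\vol_{(K,a)}|_T=0$, if and only if $\mathcal L\equiv 0$ on all of $\mathfrak g\times\mathbf R$.

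The technical core is to identify $\mathcal L$ with a nonzero multiple of $\mathfrak F^G_{\Omega,K,a}$. Writing $D=S/\vol$ with $S=\int_M s_{\widetilde g}f^{-2m}\,\omega^m/m!$ gives $dD=\vol^{-1}(dS-D\,d\vol)$, so at a point where $D=1$ one has $\mathcal L=(1-m)\,d\vol+m\,dS$. To compute $dS$ I would substitute the explicit expression \eqref{eq:3.1} for $s_{\widetilde g}$, so that $S$ becomes a Dirichlet-type energy of $f^{1-m}$ together with a zeroth-order term in $s_g$, and then differentiate, integrating by parts to move all derivatives off $\dot f$. The expected outcome, after collecting terms, is
\[
\mathcal L(\dot K,\dot a)=c\int_M\bigl(s_{\widetilde g}-d_{\Omega,K,a}\bigr)\,\dot f\,f^{-2m-1}\,\frac{\omega^m}{m!}=c\,\mathfrak F^G_{\Omega,K,a}(\dot K,\dot a)
\]
for a nonzero constant $c=c(m)$; the consistency check $\mathcal L(K,a)=0$ corresponds exactly to the identity $\int_M(s_{\widetilde g}-D)f^{-2m}\,\omega^m/m!=S-D\,\vol=0$, i.e. to the fact that $D$ is the $\widetilde g$-average of $s_{\widetilde g}$. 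Verifying that this weighted first variation of the total scalar curvature reproduces the Apostolov--Maschler normalization of $\mathfrak F^G_{\Omega,K,a}$, with the correct constant and weight $f^{-2m-1}$, is the step I expect to be the main obstacle, since it is where the precise form of \eqref{eq:3.1} and the integration by parts must be handled with care. Granting it, the previous paragraph yields the second assertion: $(K,a)$ is critical if and only if $\mathfrak F^G_{\Omega,K,a}\equiv 0$.

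Finally the first assertion follows. If there is a cKEM metric $\widetilde g_{K,a}\in\mathcal H^G_{\Omega,K,a}$, realized by some $g_0\in\mathcal K_\Omega^G$, then $s_{\widetilde g_{K,a}}$ is constant; since $d_{\Omega,K,a}$ is its $\widetilde g$-average and equals $1$ on $\widetilde{\mathcal P}_\Omega^G$, that constant is $1$ and $s_{\widetilde g}-d_{\Omega,K,a}\equiv 0$. Because $\mathfrak F^G_{\Omega,K,a}$ is the integral of $(s_{\widetilde g}-d_{\Omega,K,a})$ against Killing potentials and is independent of the representative $g$, it vanishes identically, so by the equivalence just established $(K,a)$ is a critical point of $\vol$ on $\widetilde{\mathcal P}_\Omega^G$.
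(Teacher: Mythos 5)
Your scaffolding is sound and is, at bottom, a Lagrange--multiplier repackaging of the paper's argument: the formula $d\vol_{(K,a)}(\dot K,\dot a)=-2m\int_M f^{-2m-1}\dot f\,\omega^m/m!$, the homogeneity degrees $-2m$ for $\vol$ and $2$ for $D=d_{\Omega,K,a}$, the Euler identities, and the reduction of criticality on $\{D=1\}$ to the vanishing of $\mathcal L=(1-m)\,d\vol+m\,dS$ are all correct (the paper reaches the same point by differentiating the constraint $S=\vol$ along curves, which is \eqref{eq:4.5}). The genuine gap sits in the step you flag as the main obstacle, but the difficulty is not where you locate it. Computing $dS$ is not the issue: you need not differentiate the explicit formula \eqref{eq:3.1}; the standard conformal variation of the total scalar curvature (the paper's \eqref{eq:1.1}--\eqref{eq:1.3}) gives $dS(\dot K,\dot a)=(2-2m)\int_M s_{\Tilde g}\,\dot f f^{-2m-1}\,\omega^m/m!$ at once, whence $\mathcal L(\dot K,\dot a)=2m(1-m)\int_M(s_{\Tilde g}-1)\dot f f^{-2m-1}\,\omega^m/m!$. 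The problem is the identification of this with $\mathfrak F^G_{\Omega,K,a}$: the invariant \eqref{eq:3.7} is defined with the constant $c_{\Omega,K,a}$, the $f^{-(2m+1)}$-weighted average of $s_{\Tilde g}$, whereas your integrand carries $d_{\Omega,K,a}=1$, the $f^{-2m}$-weighted average; these differ in general, as the paper stresses immediately after \eqref{eq:4.3}. The correct identity is
$$\mathcal L(\dot K,\dot a)=2m(1-m)\Bigl[\mathfrak F^G_{\Omega,K,a}(\dot K)+(c_{\Omega,K,a}-1)\int_M\dot f f^{-2m-1}\tfrac{\omega^m}{m!}\Bigr],$$
so $\mathcal L$ is not a multiple of $\mathfrak F^G_{\Omega,K,a}$ and the claimed equivalence does not follow as written.

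The gap is closable. On $\widetilde{\mathcal P}_\Omega^G$ the extra term dies whenever either side of the desired equivalence holds: if $\mathfrak F^G_{\Omega,K,a}\equiv 0$, testing its defining integral with $H=K$, $b=a$ (i.e.\ $f_{H,b,g}=f_{K,a,g}$) gives $\int_M s_{\Tilde g}f^{-2m}\omega^m/m!=c_{\Omega,K,a}\int_M f^{-2m}\omega^m/m!$, forcing $c_{\Omega,K,a}=d_{\Omega,K,a}=1$; and if $\mathcal L\equiv 0$, testing against constant $\dot f$ forces $c_{\Omega,K,a}=1$ directly. With that supplement your argument goes through. The paper's route avoids the issue altogether and is worth comparing: since $\mathfrak F^G_{\Omega,K,a}(H)$ is independent of the normalization $b$, one evaluates it with the $b$ making $(H,b)$ tangent to $\widetilde{\mathcal P}_\Omega^G$ and substitutes the differentiated constraint \eqref{eq:4.5}, obtaining \eqref{eq:4.6}, an exact proportionality between $\mathfrak F^G_{\Omega,K,a}(H)$ and the tangential derivative of $\vol$ with factor $\frac{m}{m-1}-c_{\Omega,K,a}$. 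Finally, your first assertion currently routes through the (not yet established) equivalence; note that it also follows directly, since $s_{\Tilde g}\equiv d_{\Omega,K,a}=1$ for a cKEM representative makes your $\mathcal L$ vanish on the nose.
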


\noindent
Here $\mathfrak F_{\Omega,K,a}^G$ is an obstruction to the existence of cKEM metric
defined in \cite{AM}, described as follows.
For $(K,a)\in \mathcal P_\Omega^G$, the following hold:\\

\begin{equation}\label{eq:3.6}
c_{\Omega,K,a}:=\dfrac
{\displaystyle{\int_Ms_{\Tilde{g}_{K,a}}\left(\frac{1}{f_{K,a,g}}\right)^{2m+1}
\frac{\omega^m}{m!}}}
{\displaystyle{\int_M\left(\frac{1}{f_{K,a,g}}\right)^{2m+1}
\frac{\omega^m}{m!}}}
\end{equation}
is a constant independent of the choice of $g\in \mathcal K_\Omega^G$.
Then 
\begin{equation}\label{eq:3.7}
\mathfrak F_{\Omega,K,a}^G:\mathfrak g\to \mathbf R,\ \ 
\mathfrak F_{\Omega,K,a}^G(H):=\int_M\left(\dfrac{s_{\Tilde{g}_{K,a}}-c_{\Omega,K,a}}
{f_{K,a,g}^{2m+1}}\right)f_{H,b,g}\dfrac{\omega^m}{m!}
\end{equation}
is a linear function independent of the choice of $(g,b)\in
\mathcal K_\Omega^G\times \mathbf{R}$. Obviously, if there exists a
constant scalar curvature metric in $\mathcal H_{\Omega,K,a}$,
then $\mathfrak F_{\Omega,K,a}^G$ is identically zero.
For terminological convenience we call $\mathfrak F_{\Omega,K,a}^G$ the cKEM-Futaki invariant.
A merit of Theorem \ref{main theorem} is to give a systematic computation of the cKEM-Futaki invariant.

This paper is organized as follows.
In section 2 we give a proof of Theorem \ref{main theorem}.
In section 3 we give examples of non K\"ahler cKEM metrics. This is an
extension of LeBrun's construction (\cite{L1}) on $\mathbf C\mathbf P^1 \times \mathbf C\mathbf P^1$
to $\mathbf C\mathbf P^1 \times M$ for higher dimensional $M$'s. 
In section 4, we use Maxima (a descendant of Macsyma) to compute the cKEM-Futaki invariant of 
$\mathbf C\mathbf P^1 \times \mathbf C\mathbf P^1$, the blow-up of $\mathbf C\mathbf P^2$ at one point
and other Hirzebruch surfaces.

\section{Proof of Theorem \ref{main theorem}}

Let $M$ be a compact $n$-manifold with $n\ge 3$.
Let 
$\mathrm{Riem}(M)$ denote the set of all Riemannian metrics on $M$, 
$ \mathrm{Ric}_g$ the Ricci tensor of $g$, 
$s_g$ the scalar curvature of $g$, and
$dv_g$ the volume form of $g$.
The normalized Einstein-Hilbert functional $EH : \mathrm{Riem}\,(M)\to \mathbf{R}$ is defined by
$$EH(g):=\dfrac{S(g)}{(\vol(g))^{\frac{n-2}{n}}}$$
where 
$S(g) = \int_Ms_g\,dv_g$ is the total scalar curvature and $\vol(g) = \int_M\,dv_g$ is the 
volume of $g$. 

The following first variation formulae are standard, and can be found in \cite{B}.
Let $g_t$ be a smooth family of Riemannian metrics such that
$g_0=g$ and $\dfrac{d}{dt}_{|t=0}g_t=h$. Then 
\begin{equation}\label{eq:1.1}
\dfrac{d}{dt}_{\vert_{t=0}}S(g_t)=
\int_M\left\langle \frac{s_g}{2}g-\mathrm{Ric}_g,h\right\rangle_gdv_g.
\end{equation}

Let $f_t$ be a smooth family of positive
functions such that $f_0=1,\dfrac{d}{dt}_{|t=0}f_t=\phi$. Then
$$
\dfrac{d}{dt}_{|t=0}S(f_tg)=\int_M\left\langle
\frac{s_g}{2}g-\mathrm{Ric}_g,\phi g\right\rangle dv_g=
\frac{n-2}{2}\int_Ms_g\phi dv_g
$$
and
$$
\dfrac{d}{dt}_{|t=0}\vol(f_tg)=\dfrac{d}{dt}_{|t=0}\int_Mf_t^{\frac{n}{2}}dv_g
=\frac{n}{2}\int_M\phi dv_g.
$$
Therefore
\begin{equation}\label{eq:1.2}
\dfrac{d}{dt}_{|t=0}EH(f_tg)=
\dfrac{n-2}{2(\vol (g))^{\frac{n-2}{n}}}
\int_M\left(s_g-\dfrac{S(g)}{\vol(g)}\right)\phi dv_g.
\end{equation}
By \eqref{eq:1.2}, if $s_g$ is a constant then g is a critical point of
$EH$ restricted to the conformal class of $g$.

Let $f_t$ be a smooth family of positive functions
such that $f_0=f,\dfrac{d}{dt}_{|t=0}f_t=\phi$.
Denote $\dfrac{1}{f_t^2}g=:\Tilde{g}_t,\Tilde{g}=:\Tilde{g}_0$.
Then
$$
\dfrac{d}{dt}_{|t=0}S(\Tilde{g}_t)=\int_M\left\langle
\frac{s_{\Tilde{g}}}{2}-\mathrm{Ric}_{\Tilde{g}},\frac{-2\phi}{f}\Tilde{g}
\right\rangle_{\Tilde{g}}dv_{\Tilde{g}}=(2-n)\int_M\dfrac{s_{\Tilde{g}}\phi}
{f^{n+1}}dv_g
$$
and
$$
\dfrac{d}{dt}_{|t=0}\vol(\Tilde{g}_t)=
\dfrac{d}{dt}_{|t=0}\int_M\dfrac{1}{f_t^n}dv_g
=-n\int_M\dfrac{\phi}{f^{n+1}}dv_g.
$$
Therefore
\begin{equation}\label{eq:1.3}
\dfrac{d}{dt}_{|t=0}EH(\Tilde{g}_t)=
\dfrac{2-n}{(\vol(\Tilde{g}))^{\frac{n-2}{n}}}
\int_M\dfrac{s_{\Tilde{g}}-\frac{S(\Tilde{g})}{\vol(\Tilde{g})}}{f^{n+1}}\,\phi\, dv_g.
\end{equation}


We wish to apply the formula (\ref{eq:1.3}) to the existence problem of cKEM metrics.
Let us recall the situation explained in the introduction. Let $M$ be a compact K\"ahler manifold
of complex dimension $m$ so that $n = 2m$.
Fix a compact group $G\subset \mathrm{Aut}_r(M,J)$ in the group of
reduced automorphisms of $(M,J)$, and consider a fixed K\"ahler class
$\Omega$ on $(M,J)$. Denote by
$\mathcal K_\Omega^G$ the space of $G$-invariant K\"ahler metrics $\omega$
in $\Omega$. For any $(K,a,g)\in \mathfrak g\times \mathbf{R}\times
\mathcal K_\Omega^G$, there exists unique function $f_{K,a,g}
\in C^\infty(M,\mathbf{R})$
satisfying the following two conditions:
\begin{equation}\label{eq:3.2}
\iota_K\omega=-df_{K,a,g},\ \ 
\int_Mf_{K,a,g}\frac{\omega^m}{m!}=a.
\end{equation}
By \eqref{eq:1.3} and \eqref{eq:3.2}, 
it is easy to see that $f_{K,a,g}$ has the following properties:
\begin{equation}\label{eq:3.3}
f_{K+H,a+b,g}=f_{K,a,g}+f_{H,b,g}
\end{equation}
\begin{equation}\label{eq:3.4}
f_{0,a,g}=\dfrac{a}{\vol(M,\omega)}
\end{equation}
\begin{equation}\label{eq:3.5}
f_{CK,Ca,g}=Cf_{K,a,g}
\end{equation}
If $K\not=0$, 
$$
m_{K,\Omega}:=\min\{f_{K,0,g}\,|\, x\in M\}<0.
$$
Note here that, by Lemma $1$ of \cite{AM}, 
$\min\{f_{K,0,g}\,|\, x\in M\}$ is independent of the choice of 
$g\in \mathcal K_\Omega^G$. Hence $f_{K,a,g}$ is positive
if $a>l_{K,\Omega}:=-m_{K,\Omega}\vol(M,\omega)$, by \eqref{eq:3.3} and
\eqref{eq:3.4}.

The set $\mathcal P_\Omega^G$ defined in (\ref{eq:0.1}) in the introduction then can be expressed as 
\begin{equation*}
\mathcal P_\Omega^G =\{(K,a)\in \mathfrak g\times \mathbf{R}\,|\,
a>l_{K,\Omega}\}.
\end{equation*}
With the notation of $\mathcal H_{\Omega,K,a}^G$ given in (\ref{eq:0.3}), the main problem 
of the cKEM metrics we wish to consider is the following : 
\begin{prob}\label{prob1}
Is there a cKEM metric in $\mathcal H_{\Omega,K,a}^G$ for $(K,a)\in
\mathcal P_\Omega^G$?
\end{prob}

By \eqref{eq:3.2}, the Hamiltonian function $f_{K,a,g}$
of $K$ with respect to the K\"ahler metric $g$ is Killing for both
$g$ and $\Tilde{g}_{K,a} = f_{K,a,g}^{-2} g$. So the problem above is equivalent to 
the existence of the constant scalar curvature metric in
$\mathcal H_{\Omega,K,a}^G$.
As was explained in the introduction, Apostolov and Maschler \cite{AM} introduced an obstruction
$\mathfrak F_{\Omega,K,a}^G:\mathfrak g\to \mathbf R$ 
to this problem, see (\ref{eq:3.7}).

Fix $(K,a)\in \mathcal P_\Omega^G$.
By Corollary $2$ of \cite{AM}, the functional $S$ is constant
on $\mathcal H_{\Omega,K,a}^G$. Similarly by the proof of Lemma $1$ of
\cite{AM}, $\vol$ is constant on $\mathcal H_{\Omega,K,a}^G$.
Thus we obtain the following proposition.

\begin{prop} With the notations as above, we have the following.
\begin{enumerate}
\item For $(K,a)\in \mathcal P_\Omega^G$,
\begin{equation}\label{eq:4.1}
d_{\Omega,K,a}:=
\dfrac{S(\Tilde{g}_{K,a})}{\vol(\Tilde{g}_{K,a})}
=\dfrac
{\displaystyle{\int_Ms_{\Tilde{g}_{K,a}}\left(\frac{1}{f_{K,a,g}}\right)^{2m}
\frac{\omega^m}{m!}}}
{\displaystyle{\int_M\left(\frac{1}{f_{K,a,g}}\right)^{2m}
\frac{\omega^m}{m!}}}
\end{equation}
is a constant independent of the choice of $g\in \mathcal K_\Omega^G$.\\
\item The function
\begin{equation}\label{eq:4.2}
V:\mathcal P_\Omega^G\to \mathbf{R},\ \ (K,a)\mapsto EH(\Tilde{g}_{K,a})
\end{equation}
is well-defined.
\end{enumerate}
\end{prop}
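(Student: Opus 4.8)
The plan is to reduce the Proposition to the two invariance statements recorded just above --- that both $S(\widetilde g_{K,a})$ and $\vol(\widetilde g_{K,a})$ are independent of the choice of $g\in\mathcal K_\Omega^G$ --- and then to establish those invariances by a first-variation argument. Granting them, part (1) is immediate, since $d_{\Omega,K,a}=S(\widetilde g_{K,a})/\vol(\widetilde g_{K,a})$ is a quotient of two $g$-independent quantities; and part (2) follows as well, because $V(K,a)=EH(\widetilde g_{K,a})=S(\widetilde g_{K,a})/(\vol(\widetilde g_{K,a}))^{(n-2)/n}$ is a function of those same two quantities and hence well defined on $\mathcal P_\Omega^G$. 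So the entire content is the constancy of $S$ and $\vol$ along $\mathcal K_\Omega^G$ (which is connected, being parametrized by the convex set of $G$-invariant K\"ahler potentials), and it suffices to show that their first variations vanish along an arbitrary path $\omega_t=\omega+i\partial\overline{\partial}\varphi_t$ of $G$-invariant K\"ahler forms in $\Omega$; write $\psi:=\dot\varphi_0$.

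The key preliminary is the variation of the Hamiltonian potential $f_t:=f_{K,a,g_t}$. Differentiating $\iota_K\omega_t=-df_t$ gives $\iota_K(i\partial\overline{\partial}\psi)=-d\dot f$, and contracting this $(1,1)$-form against the holomorphic Killing field $K=J\grad_g f$ yields $\dot f=\tfrac12\langle df,d\psi\rangle_g+c$ for some spatial constant $c$. The linchpin is that $c=0$: differentiating the normalization $\int_M f_t\,dv_{g_t}=a$ and using $\tfrac{d}{dt}dv_{g_t}=(\Lambda\dot\omega)\,dv_g$ together with $\Lambda\,i\partial\overline{\partial}\psi=\tfrac12\Delta_g\psi$, the two $\psi$-dependent terms cancel after integration by parts, leaving $c\,\vol(M,\omega)=0$. (That the path stays inside $\mathcal P_\Omega^G$, i.e. $f_t>0$, is automatic because $\min_x f_{K,0,g}$ is $g$-independent.)

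With $\dot f=\tfrac12\langle df,d\psi\rangle_g$ in hand, constancy of $\vol$ is a short computation. Since $dv_{\widetilde g_t}=f_t^{-n}\,dv_{g_t}$,
\begin{equation*}
\frac{d}{dt}\Big|_{t=0}\vol(\widetilde g_{K,a,g_t})=-n\int_M f^{-n-1}\dot f\,dv_g+\int_M f^{-n}(\Lambda\dot\omega)\,dv_g .
\end{equation*}
Integrating the second term by parts (self-adjointness of $u\mapsto\Lambda\,i\partial\overline{\partial}u$ turns $\Lambda\dot\omega$ into $\tfrac12\Delta_g(f^{-n})$ paired with $\psi$) and the first term by parts after substituting $\dot f=\tfrac12\langle df,d\psi\rangle_g$, one checks that the two integrals coincide, so the derivative vanishes. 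The constancy of the total scalar curvature $S$ is the analogous but heavier statement: one feeds the metric variation $h=f^{-2}(\dot g-2f^{-1}\dot f\,g)$ into the first variation \eqref{eq:1.1} applied to $\widetilde g_t$ and reduces the result to zero by integration by parts, again using that $K$ is Killing and holomorphic and that $\psi$ is $G$-invariant; this is Corollary 2 of \cite{AM}.

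The main obstacle is not the logical reduction but pinning down the potential variation exactly, and above all recognizing that the normalization $\int_M f_t\,dv_{g_t}=a$ forces the constant $c$ to vanish; without this the derivative of $\vol$ would retain the spurious term $-nc\int_M f^{-n-1}\,dv_g$ and nothing would be invariant. For $\vol$ the ensuing cancellation is transparent, whereas for $S$ the bookkeeping of the curvature terms is genuinely delicate, which is why I would invoke Corollary 2 of \cite{AM} for that half and give the self-contained argument only for $\vol$.
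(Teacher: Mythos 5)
Your proposal is correct and follows essentially the same route as the paper: the paper reduces the Proposition to the constancy of $S$ and of $\vol$ on $\mathcal H_{\Omega,K,a}^G$ and simply cites Corollary 2 and (the proof of) Lemma 1 of \cite{AM} for these two facts, which is exactly your reduction. The only difference is that you work out the volume half yourself --- the variation $\dot f=\tfrac12\langle df,d\psi\rangle_g$ with the normalization killing the additive constant, followed by the integration-by-parts cancellation --- and that computation is correct and matches what the cited Lemma 1 of \cite{AM} does.
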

\begin{thm} We have the following results.\\
(a) If there 
 exists a cKEM metric $\Tilde{g}_{K,a}\in \mathcal H_{\Omega,K,a}^G$, then
$(K,a)$ is a critical point of $V:\mathcal P_{\Omega}^G\to \mathbf{R}$. \\
(b) If $(K,a)$ is a critical point of $V:\mathcal P_{\Omega}^G\to \mathbf{R}$, 
$\mathfrak F_{\Omega,K,a}^G$ vanishes identically.
\end{thm}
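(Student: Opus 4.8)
The plan is to compute the first variation of $V(K,a)=EH(\Tilde g_{K,a})$ directly from \eqref{eq:1.3}, and then read off both statements by comparing the resulting integral with the definitions of $d_{\Omega,K,a}$ and $\mathfrak F_{\Omega,K,a}^G$. Since $V$ is independent of $g$ by part (2) of the Proposition, I may fix a single $g\in\mathcal K_\Omega^G$ and vary only $(K,a)$. Along the ray $(K+tH,a+tb)$ in $\mathcal P_\Omega^G$, properties \eqref{eq:3.3} and \eqref{eq:3.5} give the linearity $f_{K+tH,a+tb,g}=f_{K,a,g}+t\,f_{H,b,g}$, so the variation of the conformal factor is exactly $\phi=f_{H,b,g}$. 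Applying \eqref{eq:1.3} with $n=2m$, $f=f_{K,a,g}$ and $S(\Tilde g)/\vol(\Tilde g)=d_{\Omega,K,a}$ then yields
\[
dV_{(K,a)}(H,b)=\frac{2-2m}{(\vol(\Tilde g_{K,a}))^{\frac{m-1}{m}}}\int_M\frac{s_{\Tilde g_{K,a}}-d_{\Omega,K,a}}{f_{K,a,g}^{2m+1}}\,f_{H,b,g}\,\frac{\omega^m}{m!}.
\]
The prefactor is a nonzero constant since $m\ge 2$, so $(K,a)$ is critical if and only if this integral vanishes for every $(H,b)\in\mathfrak g\times\mathbf R$.

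Statement (a) is then immediate: if $\Tilde g_{K,a}$ is a cKEM metric, then $s_{\Tilde g_{K,a}}$ is constant and hence equals its own weighted average $d_{\Omega,K,a}$; the integrand vanishes pointwise, so $dV_{(K,a)}\equiv 0$.

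For statement (b) the one delicate point is that the variation of $V$ produces the constant $d_{\Omega,K,a}$, defined with weight $f^{-2m}$, whereas $\mathfrak F_{\Omega,K,a}^G$ is built from $c_{\Omega,K,a}$, defined with weight $f^{-2m-1}$, and in general $d_{\Omega,K,a}\neq c_{\Omega,K,a}$. The idea is to exploit the extra $\mathbf R$-direction. Testing the critical point condition in the direction $(H,b)=(0,b)$ and using \eqref{eq:3.4}, so that $f_{0,b,g}=b/\vol(M,\omega)$ is constant, forces $\int_M(s_{\Tilde g_{K,a}}-d_{\Omega,K,a})\,f_{K,a,g}^{-2m-1}\,\omega^m/m!=0$, which by \eqref{eq:3.6} is precisely the identity $d_{\Omega,K,a}=c_{\Omega,K,a}$. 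Once this holds, the critical point condition for a general $(H,b)$ reads $\int_M(s_{\Tilde g_{K,a}}-c_{\Omega,K,a})\,f_{K,a,g}^{-2m-1}\,f_{H,b,g}\,\omega^m/m!=0$ for all $H$, which is exactly $\mathfrak F_{\Omega,K,a}^G(H)=0$ (recall that $\mathfrak F_{\Omega,K,a}^G$ is independent of $b$, since $\int_M(s_{\Tilde g_{K,a}}-c_{\Omega,K,a})\,f_{K,a,g}^{-2m-1}\,\omega^m/m!=0$ by the definition of $c_{\Omega,K,a}$).

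I expect the main obstacle to be exactly this reconciliation of the two normalizing constants $d_{\Omega,K,a}$ and $c_{\Omega,K,a}$, which is overcome by testing against the pure $a$-direction $(0,b)$; the computation of $dV_{(K,a)}$ and the proof of (a) are routine once the linearity $f_{K+tH,a+tb,g}=f_{K,a,g}+t\,f_{H,b,g}$ is in hand.
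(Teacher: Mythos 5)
Your proposal is correct and follows essentially the same route as the paper: the same application of the first variation formula \eqref{eq:1.3} with $\phi=f_{H,b,g}$ via the linearity of $(K,a)\mapsto f_{K,a,g}$, and the same key step of testing the pure $\mathbf{R}$-direction to force $c_{\Omega,K,a}=d_{\Omega,K,a}$ (the paper's \eqref{eq:4.4}) before identifying the remaining integral with $\mathfrak F_{\Omega,K,a}^G(H)$.
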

\begin{proof}
The first statement (a)  is trivial by the first variation of the Einstein-Hilbert functional
\eqref{eq:1.3}.
To prove the second statement (b), 
suppose that $(K,a)\in \mathcal P_\Omega^G$ is a critical point of $V$.
For $H\in \mathfrak g$, 
\begin{eqnarray}
0 &=& 
\dfrac{d}{dt}_{|t=0}V(K+tH,a) \nonumber\\
&=&
\dfrac{2-2m}{\vol(M,\Tilde{g}_{K,a})^{\frac{m-1}{m}}}
\int_M\left(\dfrac{s_{\Tilde{g}_{K,a}}-d_{\Omega,K,a}}
{f_{K,a,g}^{2m+1}}\right)f_{H,0,g}\dfrac{\omega^m}{m!}  \label{eq:4.3} 
\end{eqnarray}
by \eqref{eq:1.3}, \eqref{eq:3.3} and \eqref{eq:3.5}.
Note here that, in general, $c_{\Omega,K,a}$ does not coincide with $d_{\Omega,K,a}$.
So \eqref{eq:4.3} does not mean the vanishing of cKEM-Futaki invariant $\mathfrak F_{\Omega,K,a}^G$ in (\ref{eq:3.7}).
However if there exists a cKEM metric in $\mathcal H_{\Omega,K,a}$, 
$c_{\Omega,K,a}=d_{\Omega,K,a}$ holds, i.e. $c_{\Omega,K,a}-d_{\Omega,K,a}$
is an obstruction to the existence of cKEM metric.
This obstruction can be represented as the $\mathbf{R}$-direction
first variation of $V$
as follows:
\begin{eqnarray}\label{eq:4.4}
0&=&\dfrac{d}{dt}_{|t=0}V(K,a+t) \\
&=&\dfrac{2-2m}{\vol(M,\Tilde{g}_{K,a})
^{\frac{2m-1}{m}}}(c_{\Omega,K,a}-d_{\Omega,K,a})
\int_M\dfrac{1}{f_{K,a,g}^{2m+1}}\dfrac{\omega^m}{m!}.\nonumber
\end{eqnarray}
Therefore if $(K,a)\in \mathcal P_\Omega^G$ is a critical point of 
$V$, then $ \mathfrak F_{\Omega,K,a}^G\equiv 0$.
\end{proof}



There exists a cKEM metric in $\mathcal H_{\Omega,K,a}^G$
if and only if there exists a cKEM metric in
$\mathcal H_{\Omega,CK,Ca}^G$ for $C>0$.
Therefore Problem \ref{prob1} is equivalent to the following
problem.

\begin{prob}
Is there a cKEM metric in $\mathcal H_{\Omega,K,a}^G$ for
$[(K,a)]\in \mathcal P_\Omega^G/\mathbf{R}_{>0}$?

\end{prob}

So we wish to have good representatives of elements in
$\mathcal P_\Omega^G/\mathbf{R}_{>0}$.
This is the motivation to define $\Tilde{\mathcal P}_\Omega^G$ as in the introduction, see 
(\ref{eq:0.5}).
Since $d_{\Omega,CK,Ca}=C^2d_{\Omega,K,a}$ for
$C>0$, 
$$
\Tilde{\mathcal P}_\Omega^G\simeq \mathcal P_\Omega^G/\mathbf{R}_{>0},\ \ 
(K,a)\mapsto [(K,a)]
$$
is bijective. 

\begin{proof}[\bf{Proof of Theorem \ref{main theorem}}]
Let $(K(t),a(t)),\ t\in (-\varepsilon,
\varepsilon)$ be a smooth curve in $\Tilde{\mathcal P}_\Omega^G$
such that $(K(0),a(0))=(K,a),(K'(0),a'(0))=(H,b)$.
Then 
$$
S(\Tilde{g}_{K(t),a(t)})=\vol(\Tilde{g}_{K(t),a(t)})
$$
holds for any $t\in (-\varepsilon,
\varepsilon)$. By differentiating this equation at $t=0$, we have
\begin{equation}\label{eq:4.5}
(m-1)\int_M
\dfrac{s_{\Tilde{g}_{K,a}}f_{H,b,g}}{f_{K,a,g}^{2m+1}}
\dfrac{\omega^m}{m!}=m
\int_M
\dfrac{f_{H,b,g}}{f_{K,a,g}^{2m+1}}
\dfrac{\omega^m}{m!}.
\end{equation}
Hence 
\begin{equation}\label{eq:4.6}
\begin{split}
\mathfrak F_{\Omega,K,a}^G(H)&=\left(
\dfrac{m}{m-1}-c_{\Omega,K,a}\right)\int_M\dfrac{f_{H,b,g}}
{f_{K,a,g}^{2m+1}}\dfrac{\omega^m}{m!}\\
&=\left(\dfrac{1}{m-1}-\dfrac{c_{\Omega,K,a}}{m}\right)
\dfrac{d}{dt}_{|t=0}\vol(M,\Tilde{g}_{K(t),a(t)})
\end{split}
\end{equation}
holds by \eqref{eq:3.7} and \eqref{eq:4.5}.
If there exists a cKEM metric in
$\mathcal H_{\Omega,K,a}^G$ for $(K,a)\in \Tilde{\mathcal P}_\Omega^G$,
then 
$$c_{\Omega,K,a}=d_{\Omega,K,a}=1$$ 
and
$$\mathfrak F_{\Omega,K,a}^G(H)=0.$$
 Therefore
$$\dfrac{d}{dt}_{|t=0}\vol(M,\Tilde{g}_{K(t),a(t)})=0.$$
\end{proof}

\section{Examples of non K\"ahler cKEM metrics}

In this section, we construct compact, non-K\"ahler examples of 
cKEM metrics of dimension greater than two.

Let $g_1$ be an $S^1$-invariant metric on $\mathbf{C}\mathbf P^1$ and
$g_2$ a K\"ahler metric  with $s_{g_2}=c$
on an $(m-1)$-dimensional compact complex manifold $M$. 
The $S^1$-invariant metric $g_1$ can be written in the action-angle coordinates
$(t,\theta)\in (a,b)\times (0,2\pi]$ as
$$
g_1=\dfrac{dt^2}{\Psi(t)}+\Psi(t)d\theta^2
$$
for some smooth function $\Psi (t)$. The Hamiltonian function of the generator
of the $S^1$-action is $t$\footnote{In this section, instead of sliding the Hamiltonian
function by constant, we move the interval $(a,b)$ (moment image).}.
Suppose that $a>0$.
We will look for $\Psi$ such that the Hermitian metric $g/t^2$, where $g=g_1+g_2$,
on $\mathbf{C}\mathbf P^1\times M$ has constant scalar curvature.
Since $\partial /\partial \theta$ is Killing both for $g$ and $g/t^2$,
if we find such $\Psi$, $g/t^2$ is a non-K\"ahler cKEM metric.
The scalar curvature of $g_1$ is given by
$$
s_1=\Delta_{g_1}\log \Psi=-\Psi''(t).
$$
Thus the scalar curvature of $g$ is given by
$$
s=s_1+s_2=c-\Psi''(t).
$$
We want to arrange that the scalar curvature of $h=t^{-2}g$ is $d$, which is
to say that
\begin{equation}\label{eq:5.1}
d=s_h=2\left(\dfrac{2m-1}{m-1}\right)t^{m+1}\Delta_g(t^{1-m})+
(c-\Psi''(t))t^2.
\end{equation}
We may rewrite this as
$$
c-\Psi''=\dfrac{d}{t^2}-2\left(\dfrac{2m-1}{m-1}\right)t^{m-1}\Delta_{g_1}
(t^{1-m})
$$
since the Hessian of $t$ is trivial in the $M$-directions.
Since
$$
\Delta_{g_1}\left(\dfrac{1}{t^{m-1}}\right)=(m-1)\left(
\dfrac{\Psi}{t^m}
\right)',
$$
the equation \eqref{eq:5.1} reduces to the ODE
$$
c-\Psi''=\dfrac{d}{t^2}-2(2m-1)\dfrac{\Psi'}{t}+2m(2m-1)\dfrac{\Psi}{t^2},
$$
or equivalently
\begin{equation}\label{eq:5.2}
t^2\Psi''-2(2m-1)t\Psi'+2m(2m-1)\Psi=ct^2-d.
\end{equation}
The general solution of the equation \eqref{eq:5.2} is
$$
\Psi(t)=At^{2m}+Bt^{2m-1}+\dfrac{c}{2(m-1)(2m-3)}t^2-\dfrac{d}{2m(2m-1)}.
$$
Now, in order to get a metric on $S^2$, we need to impose the boundary conditions
that
$$
\Psi(a)=\Psi(b)=0,\Psi'(a)=-\Psi'(b)=2,\ \ \Psi(t)>0\ (\text{on } (a,b)).
$$
The first four conditions reduce to a simultaneous linear equation for $A,B,c$ and $d$.
The solution is
\begin{eqnarray*}
A_{a,b,m}&=& (a^2b^2(a+b)\{2\left( a-b\right) \left( {b}^{2m-2}+{a}^{2m-2}\right) m\\
&\ \ &\qquad + 3{b}^{2m-1}-a{b}^{2m-2}+{a}^{2m-2}b
-3{a}^{2m-1}\})/E_{a,b,m}, \\
B_{a,b,m}&=& (2a^2b^2(a+b)\{(\left( b-a\right)\left( {b}^{2m-1}+{a}^{2m-1}\right) )m\\
&\ \ &\qquad - \left( {b}^{m}-{a}^{m}\right) \left( {b}^{m}+{a}^{m}\right) \})/E_{a,b,m},\\
\frac{c_{a,b,m}}{2(m-1)(2m-3)}&=&({b}^{2m}\left( 2{a}^{2m}{b}^{2}-2{a}^{2m+2}\right) m\\
&\ \ &\qquad -{a}^{2}{b}^{4m}+{b}^{2m}\left( {a}^{2m+2}-{a}^{2m}{b}^{2}\right) +{a}^{4m}{b}^{2})/E_{a,b,m},\\
\frac{d_{a,b,m}}{2m(2m-1)}&=&{b}^{2m}\left( 2{a}^{2m+1}{b}^{3}-2{a}^{2m+3}b\right) m\\
&\ & -{a}^{4}{b}^{4m}+{a}^{4m}{b}^{4}+{b}^{2m}\left( 2{a}^{2m+3}b-2{a}^{2m+1}{b}^{3}\right))/E_{a,b,m},
\end{eqnarray*}
where
\begin{equation}\label{eq:5.7}
\begin{split}
E_{a,b,m}=&(2{a}^{2m}{b}^{2m}{\left( b-a\right) }^{2}\left( b+a\right) )m^2
-3{a}^{2m}{b}^{2m}{\left( b-a\right) }^{2}\left( b+a\right) m\\
&-\left( {b}^{m}-{a}^{m}\right) \left( {b}^{m}+{a}^{m}\right) \left( {a}^{3}{b}^{2m}-{a}^{2m}{b}^{3}\right) .
\end{split}
\end{equation}

If we set
$$
\Psi_{a,b,m}(t)=A_{a,b,m}t^{2m}
+B_{a,b,m}t^{2m-1}+\dfrac{c_{a,b,m}}{2(m-1)(2m-3)}t^2
-\dfrac{d_{a,b,m}}{2m(2m-1)},
$$
we have $\Psi_{a,b,m}>0$ on $(a,b)$ by the following lemma for any $m$ and
$0<a<b$.
Therefore, for
$m\ge 2$, 
$$g_{a,b,m}=\frac{dt^2}{\Psi_{a,b,m}(t)}+\Psi_{a,b,m}(t)d\theta^2$$
defines a
metric on $\mathbf C\mathbf P^1$. 

\begin{lem}
Let $m\ge 2$ be an integer and $0<a<b$.
If a real valued function
$$
f(t)=\alpha t^{2m}+\beta t^{2m-1}+\gamma t^2+\delta
$$
satisfies the boundary conditions
$$f(a)=f(b)=0,\ \ f'(a),-f'(b)>0,$$
then $f>0$ on $(a,b)$.
\end{lem}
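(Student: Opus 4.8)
The plan is to reduce this positivity statement to a root-counting fact about the very special polynomial $f$, which is a linear combination of only the four monomials $t^{2m}$, $t^{2m-1}$, $t^2$ and $t^0$. Since $m\ge 2$ the exponents $2m,2m-1,2,0$ are distinct, so $f$ has at most four nonzero terms. First I would record the classical consequence of Descartes' rule of signs (equivalently, a short induction on the number of terms using Rolle's theorem after dividing by the lowest power of $t$): a real polynomial with at most four nonzero terms has at most three positive roots, counted with multiplicity.

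Next I would translate the boundary data into sign information near the endpoints. From $f(a)=0$ and $f'(a)>0$, the point $a$ is a simple positive root and $f>0$ on $(a,a+\varepsilon)$ for small $\varepsilon$; from $f(b)=0$ and $f'(b)<0$, the point $b$ is a simple positive root and $f>0$ on $(b-\varepsilon,b)$. Thus $a$ and $b$ already account for two positive roots counted with multiplicity, and $f$ is positive just inside each endpoint.

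The core of the argument is to show that if $f$ fails to be positive somewhere in $(a,b)$, then it must acquire at least two further positive roots (with multiplicity), contradicting the three-root bound. Suppose $f(t_0)\le 0$ for some $t_0\in(a,b)$. Setting $c_1=\inf\{t\in(a,b):f(t)\le 0\}$ and $c_2=\sup\{t\in(a,b):f(t)\le 0\}$, positivity of $f$ near $a^+$ and $b^-$ forces $a<c_1\le c_2<b$, while continuity gives $f(c_1)=f(c_2)=0$. If $c_1<c_2$ these are two distinct interior zeros, so $f$ has at least four positive roots. If instead $c_1=c_2=:c$, then $f\ge 0$ throughout $(a,b)$ with $f(c)=0$, so $c$ is an interior minimum; differentiability forces $f'(c)=0$, making $c$ a root of multiplicity at least two, and again $f$ has at least four positive roots counted with multiplicity. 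Either way we contradict the three-root bound, and therefore $f>0$ on $(a,b)$.

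The step I expect to demand the most care is the multiplicity bookkeeping in the last paragraph, and in particular the ``touching'' case $c_1=c_2$, where one must invoke that an interior zero of a nonnegative differentiable function is automatically a critical point in order to upgrade it to a double root. Everything else is routine once the four-term root bound is in hand; I would simply cite Descartes' rule of signs for that bound rather than reprove it, noting only that the gaps between the exponents are irrelevant since the rule depends solely on the ordered list of nonzero coefficients.
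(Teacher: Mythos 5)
Your proof is correct, but it takes a genuinely different route from the paper's. The paper also argues by contradiction from a hypothetical $c\in(a,b)$ with $f(c)\le 0$, but it counts \emph{critical points} rather than zeros: the boundary conditions then force at least three critical points of $f$ in $(a,b)$ (a positive local maximum on each side of $c$ and a minimum in between), whereas the explicit computation
\[
\left(\frac{f'(t)}{t}\right)'=t^{2m-4}\left\{2m(2m-2)\alpha t+(2m-1)(2m-3)\beta\right\}
\]
shows that $(f'/t)'$ has at most one zero in $(0,\infty)$, so by Rolle's theorem $f'/t$ --- and hence $f'$ --- has at most two zeros in $(a,b)$; contradiction. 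You instead count zeros of $f$ itself with multiplicity and invoke Descartes' rule of signs (or the equivalent Rolle induction on the number of terms) to bound a four-term polynomial by three positive roots. Both arguments exploit the same structural feature, namely that only four monomials appear, and both are sound: your identification of $a$ and $b$ as simple roots with $f>0$ just inside, and your two-case analysis at $c_1$ and $c_2$, are handled correctly, including the ``touching'' case where an interior zero of a nonnegative function must be a critical point and hence a double root. The trade-off is that the paper's critical-point count absorbs the touching case automatically (it still produces three distinct critical points) at the cost of an explicit differentiation, while your version externalizes the hard work to a standard cited fact but requires the extra multiplicity bookkeeping.
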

\begin{proof}
Suppose that there exists $c\in (a,b)$ such that $f(c)\le 0$. Then, by the boundary
condition, there exist at least three critical points of $f$ in $(a,b)$.
On the other hand, since
$$
\frac{f'(t)}{t}=2m\alpha t^{2m-2}+(2m-1)\beta t^{2m-3}+2\gamma,
$$
$$
\left(\frac{f'(t)}{t}\right)'=t^{2m-4}\left\{2m(2m-2)\alpha t+(2m-1)(2m-3)\beta\right\},
$$
$\dfrac{f'}{t}$ has at most two zeros in $(a,b)$. This is a contradiction.
\end{proof}

Moreover if
$g_2$ is a K\"ahler metric with $s_{g_2}=c_{a,b,m}$ on an $(m-1)$-dimensional
compact complex manifold $M$,
$$
h_{a,b,m}(g_2)=\frac{1}{t^2}(g_{a,b,m}+g_2)
$$
is an $S^1$-invariant
cKEM metric with $s_{h_{a,b,m}(g_2)}=d_{a,b,m}$ on $\mathbf{CP}^1\times
M$.

For simplicity, we now put $b=a+1$. 
Let
$$
\mathcal C_m=\{c_{a,a+1,m}\,|\, a>0\}.
$$
Since
$$
\lim_{a\to +0}c_{a,a+1,m}=\infty,\ \ \lim_{a\to \infty}c_{a,a+1,m}=8m-8,
$$
we see that $(8m-8,\infty)\subset \mathcal C_m$.
Hence we have proved the following.

\begin{thm}
Let $c>8m-8$. Then there exists $a>0$ such that
for any K\"ahler metric $g_2$ with $s_{g_2}=c=c_{a,a+1,m}$ on an $(m-1)$-dimensional
compact complex manifold $M$, $h_{a,a+1,m}(g_2)$ is an $S^1$-invariant
cKEM metric on $\mathbf{CP}^1\times M$.
On the other hand, if $c\not\in \mathcal C_m$, for any $a>0$ and
$g_2$, $h_{a,a+1,m}(g_2)$ is not a cKEM metric. 
\end{thm}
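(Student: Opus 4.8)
The plan is to treat the two assertions separately, reducing each to the single real quantity $c_{a,a+1,m}$ together with the explicit scalar curvature formula \eqref{eq:5.1}. For the first assertion I would observe that $a\mapsto c_{a,a+1,m}$ is continuous on $(0,\infty)$, being a rational expression in $a$ whose denominator $E_{a,a+1,m}$ does not vanish for $a>0$. Together with the two limits recorded above, $\lim_{a\to+0}c_{a,a+1,m}=\infty$ and $\lim_{a\to\infty}c_{a,a+1,m}=8m-8$, the intermediate value theorem gives $(8m-8,\infty)\subset\mathcal{C}_m$, so for any $c>8m-8$ there is some $a>0$ with $c_{a,a+1,m}=c$. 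For this $a$ the hypothesis $s_{g_2}=c=c_{a,a+1,m}$ is exactly what the construction above requires: by the Lemma $\Psi_{a,a+1,m}>0$ on $(a,a+1)$, so $g_{a,a+1,m}$ is a genuine metric on $\mathbf{CP}^1$, and $h_{a,a+1,m}(g_2)=t^{-2}(g_{a,a+1,m}+g_2)$ is an $S^1$-invariant metric of constant scalar curvature $d_{a,a+1,m}$, hence a cKEM metric.

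For the converse the essential observation is that $h_{a,a+1,m}(g_2)$ is built from the fixed function $\Psi_{a,a+1,m}$, and that $g_2$ enters its scalar curvature only through the constant $s_{g_2}=c$. By construction $\Psi_{a,a+1,m}$ solves the ODE \eqref{eq:5.2} with constants $c_{a,a+1,m}$ and $d_{a,a+1,m}$, which is to say that \eqref{eq:5.1} holds identically on $(a,a+1)$ with left-hand side $d_{a,a+1,m}$ when the $M$-factor scalar curvature is taken to be $c_{a,a+1,m}$. On the other hand, the actual scalar curvature of $h_{a,a+1,m}(g_2)$ is the right-hand side of \eqref{eq:5.1} evaluated with the same $\Psi_{a,a+1,m}$ but with the true value $c$. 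Subtracting the two, the $\Delta_g(t^{1-m})$ term and the $\Psi''_{a,a+1,m}$ term cancel and I obtain
$$s_{h_{a,a+1,m}(g_2)}=d_{a,a+1,m}+(c-c_{a,a+1,m})\,t^2.$$
If $c\notin\mathcal{C}_m$, then $c\neq c_{a,a+1,m}$ for every $a>0$, so the coefficient $c-c_{a,a+1,m}$ is nonzero; since $t$ ranges over the nondegenerate interval $(a,a+1)$, the right-hand side is a nonconstant function on $\mathbf{CP}^1$, and therefore $h_{a,a+1,m}(g_2)$ does not have constant scalar curvature and is not a cKEM metric.

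The step I expect to be the main obstacle is the clean derivation of the identity $s_{h_{a,a+1,m}(g_2)}=d_{a,a+1,m}+(c-c_{a,a+1,m})t^2$; once it is in hand, only the nonvanishing---not the sign---of $c-c_{a,a+1,m}$ is needed, and the nonconstancy of $t^2$ on an open interval is immediate. A subsidiary point worth verifying is that $E_{a,a+1,m}\neq0$ for all $a>0$, which both secures the continuity of $a\mapsto c_{a,a+1,m}$ used in the first part and guarantees that the explicit constants $A_{a,a+1,m},B_{a,a+1,m},c_{a,a+1,m},d_{a,a+1,m}$ are well defined throughout.
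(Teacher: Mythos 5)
Your argument is correct and follows the paper's route: the intermediate value theorem applied to the two limits of $c_{a,a+1,m}$ gives $(8m-8,\infty)\subset\mathcal C_m$, and the preceding construction (positivity lemma plus the ODE \eqref{eq:5.2}) does the rest. The paper leaves the converse direction implicit; your identity $s_{h_{a,a+1,m}(g_2)}=d_{a,a+1,m}+(c-c_{a,a+1,m})t^2$, obtained by subtracting the defining equation of $\Psi_{a,a+1,m}$ from the conformal scalar curvature formula \eqref{eq:5.1}, is exactly the computation the paper relies on, and your flag that $E_{a,a+1,m}\neq 0$ should be checked is a fair point that the paper also passes over.
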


Note here that, for small $m$, e.g. $m = 2, 3, 4, 5$, we can directly
confirm that $\mathcal C_m=(8m-8,\infty)$.
Hence in such cases, $c\not\in \mathcal C_m$ if and only if $c\le 8m-8$.

\noindent
This theorem extends the case when $M = \mathbf C\mathbf P^1$ due to LeBrun \cite{L1}.

\section{Computations in the case of toric surfaces}

Let $(M,J,g)$ be an $m$-dimensional compact toric K\"ahler manifold.
We denote by
$\Delta\subset \mathbf{R}^m$,
$u$ and $\text{{\bf H}}^u$ the moment polytope, the symplectic potential  
and the inverse $\mathrm{Hess}(u)^{-1}$ of the Hessian of $u$ respectively. 
Then by the equation $(22)$ in \cite{AM},
\begin{equation}\label{eq:6.1}
\dfrac{s_{\Tilde{g}_{K,a}}}{f_{K,a,g}^{2m}}=
-f_{K,a,g}\sum_{i,j=1}^m\left(
\dfrac{1}{f_{K,a,g}^{2m-1}}\text{{\bf H}}^u_{ij}
\right)_{,ij}
\end{equation}
holds. Since $f_{K,a,g}$ is an affine linear function of action 
coordinates,
the equation $(30)$ in \cite{AM} implies
\begin{equation}\label{eq:6.2}
\int_M\dfrac{s_{\Tilde{g}_{K,a}}}{f_{K,a,g}^{2m}}
\dfrac{\omega^m}{m!}=
\dfrac{2(2\pi)^m}{m!}\int_{\partial \Delta}\dfrac{1}{f_{K,a,g}^{2m-2}}
\,d\sigma.
\end{equation}
On the other hand
$$
\int_M\dfrac{1}{f_{K,a,g}^{2m}}
\dfrac{\omega^m}{m!}=
\dfrac{(2\pi)^m}{m!}\int_{\Delta}\dfrac{1}{f_{K,a,g}^{2m}}
\,d\mu
$$
holds. Therefore $V$ is given by
\begin{equation}\label{eq:6.3}
V(K,a)=\dfrac{4\pi}{(m!)^{\frac{1}{m}}}
\dfrac{\displaystyle{\int_{\partial \Delta}\dfrac{1}{f_{K,a,g}^{2m-2}}
\,d\sigma}}{\displaystyle{\left(\int_{\Delta}\dfrac{1}{f_{K,a,g}^{2m}}
\,d\mu\right)^{\frac{m-1}{m}}}}.
\end{equation}
 
In what follows, the coordinates of the moment map image of toric K\"ahler surfaces will
be denoted by $(\mu_1, \mu_2)$.


\subsection{$\mathbf{C}\mathbf P^2$ case}


In this case, up to scale, $\Delta$ is the convex hull of the
three points $(0,0),(1,0)$ and $(0,1)$. An affine linear function
$f = a\mu_1 + b\mu_2 + c$ is positive on $\Delta$ if and only if $c,a+c,b+c>0$.

Since
\begin{align*}
\int_{\partial \Delta}\dfrac{d\sigma}{(a\mu_1+b\mu_2+c)^2}&=
\dfrac{3c+a+b}{c(a+c)(b+c)},\\
\int_\Delta\dfrac{d\mu_1 d\mu_2}{(a\mu_1+b\mu_2+c)^4}&=\dfrac{3c^2+2(a+b)c+ab}{6c^2(a+c)^2(b+c)^2},
\end{align*}
we have
\begin{eqnarray}\label{eq:6.4}
V(a,b,c)&:=&2\sqrt{2}\pi\dfrac{\displaystyle{\int_{\partial \Delta}\dfrac{1}{(a\mu_1+b\mu_2+c)^2}
\,d\sigma}}{\displaystyle{\left(\int_{\Delta}\dfrac{1}{(a\mu_1+b\mu_2+c)^4}
\,d\mu_1 d\mu_2\right)^{\frac12}}}\\
&=& 
\dfrac{4\sqrt{3}\pi(3c+a+b)}{\sqrt{3c^2+2(a+b)c+ab}} \nonumber
\end{eqnarray}
and
\begin{equation}\label{eq:6.5}
\dfrac{\partial V}{\partial c}(a,b,c)
=-\dfrac{4\sqrt{3}\pi(a^2-ab+b^2)}{(3c^2+2(a+b)c+ab)^{\frac32}}.
\end{equation}
Therefore, by \eqref{eq:4.4} and \eqref{eq:6.5},
$\mathcal H_{\Omega,(a,b),c}^{T^2}$ admits cKEM equation 
only if $(a,b)=(0,0)$.
(Note that the notation $\mathcal H_{\Omega,(a,b),c}^{T^2}$ is a replacement of 
the previous notation $\mathcal H_{\Omega,K,a}^G$; 
 this is allowed because $(K,a)$ is determined by $((a,b),c)$. )
Hence, we obtain the following result.
\begin{prop}
Up to constant multiple, the Fubini-Study metric is the only
 $T^2$-invariant cKEM metric on
$\mathbf{C}\mathbf P^2$. 
\end{prop}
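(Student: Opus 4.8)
The plan is to extract the existence obstruction directly from the explicit derivative \eqref{eq:6.5} and then reduce the one remaining case to a classical uniqueness statement. By part (a) of the Theorem proved above, the existence of a cKEM metric in $\mathcal H_{\Omega,(a,b),c}^{T^2}$ forces $((a,b),c)$ to be a critical point of the volume functional $V$; in particular its $\mathbf{R}$-direction derivative, which is precisely the one appearing in \eqref{eq:4.4} and equals $\partial V/\partial c$ in the toric coordinates, must vanish. By \eqref{eq:6.5} we have $\partial V/\partial c=-4\sqrt3\,\pi(a^2-ab+b^2)\big/(3c^2+2(a+b)c+ab)^{3/2}$, and the base $3c^2+2(a+b)c+ab$ is strictly positive on the region $c,a+c,b+c>0$ where $f=a\mu_1+b\mu_2+c$ is positive on $\Delta$; indeed it equals $6c^2(a+c)^2(b+c)^2\int_\Delta f^{-4}\,d\mu_1 d\mu_2$, a product of positive quantities. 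Hence $\partial V/\partial c=0$ if and only if $a^2-ab+b^2=0$, and since $a^2-ab+b^2=(a-\tfrac{b}{2})^2+\tfrac34 b^2$ this forces $(a,b)=(0,0)$.

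First I would record the structural consequence of $(a,b)=(0,0)$. In that case the Hamiltonian $f_{K,a,g}=c$ is a nonzero constant, so $K=J\,\mathrm{grad}_g f=0$ and the conformal factor $f^{-2}$ is constant; thus $\widetilde g_{K,a}=c^{-2}g$ is a homothety of the K\"ahler metric $g$, and the cKEM requirement that $s_{\widetilde g_{K,a}}$ be constant reduces to the requirement that $g$ itself be a $T^2$-invariant constant scalar curvature K\"ahler (cscK) metric on $\mathbf{C}\mathbf P^2$.

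The final step is to invoke uniqueness of the toric cscK metric. The Fubini-Study metric is K\"ahler-Einstein, hence cscK, and it is $T^2$-invariant; moreover, since $H^2(\mathbf{C}\mathbf P^2,\mathbf{R})$ is one-dimensional, all K\"ahler classes are positive multiples of one another, so up to scale there is a single class to consider. Uniqueness of cscK metrics in that class (Bando-Mabuchi in the K\"ahler-Einstein case, or the toric cscK uniqueness phrased through the symplectic potential) then shows that every $T^2$-invariant cscK metric coincides with the Fubini-Study metric up to constant multiple. Assembling the three steps yields the Proposition.

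The step I expect to require the most care is the last one. The volume-minimization machinery together with the closed-form expression \eqref{eq:6.5} disposes cleanly of every candidate with $K\ne 0$, leaving only the conformally trivial case; but certifying that the Fubini-Study metric is the unique $T^2$-invariant cscK representative is an input from K\"ahler geometry that the volume functional itself does not see, since for $K=0$ the functional $V$ merely reproduces the ordinary Einstein-Hilbert functional of a fixed class, whose critical-point content has already been exhausted.
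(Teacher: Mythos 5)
Your proposal is correct and follows essentially the same route as the paper: the explicit derivative \eqref{eq:6.5} shows that the $\mathbf{R}$-direction criticality forced by \eqref{eq:4.4} can hold only when $a^2-ab+b^2=0$, i.e.\ $(a,b)=(0,0)$, which collapses the problem to the cscK case. The only difference is that you spell out the final uniqueness step explicitly, whereas the paper leaves it implicit and instead records an alternative argument via LeBrun's classification of compact Hermitian Einstein $4$-manifolds.
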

Note that it is possible to prove this proposition without using the volume minimization.
In fact, (iii) in the introduction can be written as 
$$ \mathrm{Ric}_0 = - F^+\circ F^-$$
with $F^+$ and $F^-$ are self-dual and anti-self dual harmonic forms (c.f. \cite{L1}). 
Thus, on $\mathbf C\mathbf P^2$, we have 
$F^- = 0$, and the cKEM metric is Einstein. But compact Hermitian Einstein 
4-manifolds are classified by LeBrun \cite{LeBrun12} to be either the Fubini-Study metric on 
$\mathbf C\mathbf P^2$, the Page metric or Chen-LeBrun-Weber metric. Thus the Fubini-Study metric
is the only cKEM metric on $\mathbf C\mathbf P^2$. Orbifold cKEM metrics on weighted projective planes are also classified by Apostolov-Maschler \cite{AM}, Theorem 4.


\subsection{$\boldsymbol{\mathbf{C}\mathbf P^1\times \mathbf{C}\mathbf P^1}$ case}

Let 
$\Delta_p$ be the convex hull of $(0,0),(p,0),(p,1),(0,1)$, where 
$p\ge 1$.
An affine linear function $f = a\mu_1+b\mu_2+c$ is positive on
$\Delta_p $ if and only if  $c,pa+c,pa+b+c,b+c>0$.
We denote
\begin{align*}
&\mathcal P_p:=\{(a,b,c)\in \mathbf{R}^3\,|\, c,pa+c,pa+b+c,b+c>0\},\\
&\mathcal P_p(1):=\mathcal P_p\cap \{(a,b,c)\,|\, b+pa+2c=1\}.
\end{align*}
Note that this choice of $b+pa+2c=1$ can be replaced by any other affine linear function
giving a slice of $\mathcal P_p$.
We chose this simply because it gives a simpler computation.
For $(a,b,c)\in \mathcal P_p$, we define $s_p(a,b,c)$ and $v_p(a,b,c)$ by
\begin{eqnarray*}
&\ &\int_{\partial \Delta_p}\dfrac{d\sigma}{(a\mu_1+b\mu_2+c)^2} \\
&\ &=\frac{1}{c\left( b+c\right) }+\frac{p}{\left( b+c\right) \left( b+pa+c\right) }+\frac{1}{\left( b+pa+c\right) \left( pa+c\right) }+\frac{p}{\left( pa+c\right) c}\\
&\ &=\frac{\left( 2ac+{a}^{2}+ab\right) {p}^{2}+\left( 2{c}^{2}+2\left( a+b\right) c+ab+{b}^{2}\right) p+2{c}^{2}+2bc}{\left( pa+c\right) \left( b+pa+c\right) \left( b+c\right) c}\\
&\ &=\dfrac{s_p(a,b,c)}{\left( pa+c\right) \left( b+pa+c\right) \left( b+c\right) c}
\end{eqnarray*}
and
\begin{eqnarray*}
&\ &\int_{\Delta_p}\dfrac{d\mu_1d\mu_2}{(a\mu_1+b\mu_2+c)^4}\\
&\ &=p\{
\left( 2{a}^{3}c+a^3{b}\right) {p}^{3}+\left( 8{a}^{2}{c}^{2}+8a^2{b}c+2{a}^{2}{b}^{2}\right) {p}^{2}\\
&\ &\quad +\left( 12a{c}^{3}+18ab{c}^{2}+8{a}b^2c+{a}b^3\right) p\\
&\ &\quad +6{c}^{4}+12b{c}^{3}+8{b}^{2}{c}^{2}+2{b}^{3}c\}/(6\left( pa+c\right)^2 \left( b+pa+c\right)^2 \left( b+c\right)^2 c^2)\\
&\ &=\dfrac{v_p(a,b,c)}{6\left( pa+c\right)^2 \left( b+pa+c\right)^2 \left( b+c\right)^2 c^2}.
\end{eqnarray*}
Then, for $(a,b,c)\in \mathcal P_p(1)$, we have
\begin{equation}\label{eq:6.6}
\begin{split}
(V_p(a,b))^2 
&:=
\dfrac{48\pi^2s_p(a,b,(1-b-pa)/2)^2}{v_p(a,b,(1-b-pa)/2)}\\
&=
-\frac{96\pi^2{\left( {a}^{2}{p}^{3}-{a}^{2}{p}^{2}-{b}^{2}p-p+{b}^{2}-1\right) }^{2}}{p\left( {a}^{4}{p}^{4}-2{a}^{2}{b}^{2}{p}^{2}+2{a}^{2}{p}^{2}+{b}^{4}+2{b}^{2}-3\right) }
\end{split}
\end{equation}
Note here that, in general,
the function $V$ on $\mathcal P_\Omega^G$ is scale invariant, that 
is $V(dK,da)=V(K,a)$ for any $(K,a)\in \mathcal P_{\Omega}^G$ and $d>0$.
Hence if $(a,b)$ is a critical point of $V_p(a,b)$ and $(a,b,(1-b-pa)/2)
\in \mathcal P_p(1)$, then cKEM-Futaki invariant for  $(a,b,(1-b-pa)/2)$
vanishes. The derivatives are computed as follows:

\begin{align*}
\dfrac{\partial V_p^2}{\partial a}&=
-\frac{768\pi^2ap\left( {a}^{2}{p}^{3}+{b}^{2}p-p-2{b}^{2}+2\right) \left( {a}^{2}{p}^{3}-{a}^{2}{p}^{2}-{b}^{2}p-p+{b}^{2}-1\right) }{{\left( {a}^{4}{p}^{4}-2{a}^{2}{b}^{2}{p}^{2}+2{a}^{2}{p}^{2}+{b}^{4}+2{b}^{2}-3\right) }^{2}}\\
\dfrac{\partial V_p^2}{\partial b}&=
\frac{768\pi^2b\left( {a}^{2}{p}^{3}-{a}^{2}{p}^{2}-{b}^{2}p-p+{b}^{2}-1\right) \left( 2{a}^{2}{p}^{3}-{a}^{2}{p}^{2}-2p-{b}^{2}+1\right) }{p{\left( {a}^{4}{p}^{4}-2{a}^{2}{b}^{2}{p}^{2}+2{a}^{2}{p}^{2}+{b}^{4}+2{b}^{2}-3\right) }^{2}}
\end{align*}
Both of the above vanish only when either of following holds.
\begin{align*}
&[a=\frac{\sqrt{b^2+\frac{p+1}{p-1}}}{p}],\ \ 
[a=-\frac{\sqrt{b^2+\frac{p+1}{p-1}}}{p}],\ \ [a=0,b=0],\\
&[a=0,b=-\sqrt{1-2\,p}],\ \ [a=0,b=\sqrt{1-2\,p}],\\
&[a=0,b=-\sqrt{-\frac{p}{p-1}-\frac{1}{p-1}}],\ \ 
[a=0,b=\sqrt{-\frac{p}{p-1}-\frac{1}{p-1}}],\\
&[a=-\frac{\sqrt{p-2}}{p^{\frac32}},b=0],\ \ 
[a=\frac{\sqrt{p-2}}{p^{\frac32}},b=0],\\
&[a=-\frac{\sqrt{1-\frac{2}{p}}}{p-1},b=-\frac{\sqrt{1-2\,p}}{p-1}],\ \ 
[a=\frac{\sqrt{1-\frac{2}{p}}}{p-1},b=-\frac{\sqrt{1-2\,p}}{p-1}],\\
&[a=-\frac{\sqrt{1-\frac{2}{p}}}{p-1},b=\frac{\sqrt{1-2\,p}}{p-1}],\ \ 
[a=\frac{\sqrt{1-\frac{2}{p}}}{p-1},b=\frac{\sqrt{1-2\,p}}{p-1}].
\end{align*}

\noindent
{\bf Case $\boldsymbol{p=1}$}\\
$\dfrac{\partial V_1^2}{\partial a}=\dfrac{\partial V_1^2}{\partial b}=0$ if and only if
$b(a^2-b^2-1)=a(a^2-b^2+1)=0$, that is
$[a=0,b=0]$. 

\medskip

\noindent
{\bf Case $\boldsymbol{p>1}$}\\
The real solutions of $\dfrac{\partial V_p^2}{\partial a}
=\dfrac{\partial V_p^2}{\partial b}=0$ are
\begin{align*}
&[a=\frac{\sqrt{b^2+\frac{p+1}{p-1}}}{p}],\ \ 
[a=-\frac{\sqrt{b^2+\frac{p+1}{p-1}}}{p}],\ \ [a=0,b=0],\\
&[a=-\frac{\sqrt{p-2}}{p^{\frac32}}, b=0],\ \ 
[a=\frac{\sqrt{p-2}}{p^{\frac32}}, b=0],
\end{align*}
where the last two solutions appear only when $p>2$.

\medskip

\begin{itemize}
\item $[a=0,b=0]$：In this cae, $(0,0,1/2)\in \mathcal P_p(1)$. 
\item $[a=\frac{\sqrt{b^2+\frac{p+1}{p-1}}}{p}]$：In this case, we have
$$
c=\dfrac12(1-b-pa)=\frac12\left(1-b-\sqrt{b^2+\frac{p+1}{p-1}}\right).
$$
But $c>0$ when $b<\dfrac{1}{1-p}<0$.
However we get
$$b+c=\frac12\left(1+b-\sqrt{b^2+\frac{p+1}{p-1}}\right)<0.$$
Hence  $(a,b,c)\not\in \mathcal P_p(1)$.
\item $[a=-\frac{\sqrt{b^2+\frac{p+1}{p-1}}}{p}]$：In this case we have
$$
c=\dfrac12(1-b-pa)=\frac12\left(1-b+\sqrt{b^2+\frac{p+1}{p-1}}\right).
$$
$c>0$ when $b>\dfrac{1}{1-p}$.
However, we get 
$$pa+c=\frac12\left(1-b-\sqrt{b^2+\frac{p+1}{p-1}}\right)<0.$$
Hence
$(a,b,c)\not\in \mathcal P_p(1)$. 
\item $[a=-\frac{\sqrt{p-2}}{p^{\frac32}},b=0]$：In this case we have
\begin{eqnarray*}
c&=&\dfrac12(1-b-pa)=\dfrac12 \left(1+\sqrt{\dfrac{p-2}{p}}\right)>0,\\
pa+c&=&\dfrac12 \left(1-\sqrt{\dfrac{p-2}{p}}\right)>0.
\end{eqnarray*}
Hence $(a,b,c)\in \mathcal P_p(1)$. 
\item $[a=\frac{\sqrt{p-2}}{p^{\frac32}},b=0]$：In this case we have
\begin{eqnarray*}
c&=&\dfrac12(1-b-pa)=\dfrac12 \left(1-\sqrt{\dfrac{p-2}{p}}\right)>0,\\
pa+c&=&\dfrac12 \left(1+\sqrt{\dfrac{p-2}{p}}\right)>0.
\end{eqnarray*}
Hence $(a,b,c)\in \mathcal P_p(1)$. 
\end{itemize}
We summarize our results of this subsection 4.2 as follows.
\begin{prop}
For the toric K\"ahler surface corresponding to
$\Delta_p$, if $1\le p\le 2$ then the cKEM-Futaki invariant vanishes only when
$(a,b)=(0,0)$. 
If $p>2$ then the cKEM-Futaki invariant vanishes when
$(a,b)=(0,0),\ (\pm\frac{\sqrt{p-2}}{p^{\frac32}},0)$. In this case $p>2$,
 LeBrun (\cite{L1}, Theorem C) shows that 
the K\"ahler class $\Omega$ contains two distinct cKEM metrics, which are ambitoric in the sense of
\cite{ACG16}. 
\end{prop}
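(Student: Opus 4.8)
The plan is to reduce the statement to locating the critical points of the explicit rational function $V_p$ and then to read off which of them satisfy the positivity constraints defining $\mathcal P_p(1)$. First I would upgrade the one-sided remark preceding the proposition — that a critical point of $V_p$ lying in $\mathcal P_p(1)$ forces $\mathfrak F^{T^2}_{\Omega,K,a}$ to vanish — into a genuine equivalence, which is what the `only when' clause requires. On the slice $\widetilde{\mathcal P}^{T^2}_\Omega=\{d_{\Omega,K,a}=1\}$ one has $S=\vol$, so $V=\vol^{1/m}$ there, and the main theorem (Theorem \ref{main theorem}) gives that criticality of $V$ is equivalent to $\mathfrak F^{T^2}_{\Omega,K,a}\equiv 0$. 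Because $V$ is scale invariant and the condition $\mathfrak F^{T^2}_{\Omega,K,a}\equiv0$ is likewise invariant under $(K,a)\mapsto(dK,da)$, both descend to $\mathcal P^{T^2}_\Omega/\mathbf R_{>0}$; as $\mathcal P_p(1)=\mathcal P_p\cap\{b+pa+2c=1\}$ is another global cross-section of this action, criticality of $V_p$ on $\mathcal P_p(1)$ is equivalent to vanishing of $\mathfrak F^{T^2}_{\Omega,K,a}$. This reduces everything to finding the critical points of $V_p$ inside $\mathcal P_p(1)$.

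Second, I would solve $\partial V_p^2/\partial a=\partial V_p^2/\partial b=0$ using the factorizations displayed above. The decisive feature is that both partials share the factor $(a^2p^3-a^2p^2-b^2p-p+b^2-1)$, which is precisely the numerator appearing in $V_p^2$ in \eqref{eq:6.6}; its vanishing defines the two families $a=\pm\sqrt{b^2+(p+1)/(p-1)}/p$. The complementary factors, together with the factors $a$ and $b$ carried by $\partial_a$ and $\partial_b$ respectively, produce the remaining candidates, namely the loci $a=0$ and $b=0$ intersected with the zeros of $a^2p^3+b^2p-p-2b^2+2$ and of $2a^2p^3-a^2p^2-2p-b^2+1$. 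Discarding the solutions that are non-real for $p\ge1$ — those involving $\sqrt{1-2p}$ or $\sqrt{-(p+1)/(p-1)}$ — leaves exactly the reduced list recorded before the proposition. Since $V_p$ has no zeros on $\mathcal P_p(1)$ (its zero locus is exactly the excluded $\sqrt{\,\cdot\,}$-families), passing from $V_p^2$ back to $V_p$ introduces no spurious critical points there.

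Third, I would impose the defining inequalities $c,\ pa+c,\ pa+b+c,\ b+c>0$ of $\mathcal P_p(1)$ on each surviving family, with $c=(1-b-pa)/2$. For the $\sqrt{\,\cdot\,}$-families a short computation shows the sign conditions are incompatible: requiring $c>0$ makes one of $b+c$ or $pa+c$ negative, so these never lie in $\mathcal P_p(1)$. The branch $a=0$ collapses to $(0,0)$, which lies in $\mathcal P_p(1)$ as $(0,0,1/2)$. The branch $b=0$ yields $a=\pm\sqrt{p-2}/p^{3/2}$, real precisely when $p>2$, and then $c=\tfrac12(1\mp\sqrt{(p-2)/p})$ and $pa+c=\tfrac12(1\pm\sqrt{(p-2)/p})$ are both positive since $\sqrt{(p-2)/p}<1$. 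Collecting cases gives: for $1\le p\le 2$ the unique critical point of $V_p$ in $\mathcal P_p(1)$ is $(0,0)$, while for $p>2$ they are $(0,0)$ and $(\pm\sqrt{p-2}/p^{3/2},0)$. By the equivalence of the first paragraph this is exactly the set where $\mathfrak F^{T^2}_{\Omega,K,a}$ vanishes, which is the assertion.

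The genuinely non-routine step is the first one: the algebra of factoring $\partial V_p^2$ and checking the inequalities is mechanical (and was carried out with Maxima), but turning the implication `critical $\Rightarrow\mathfrak F\equiv0$' into an equivalence is what legitimizes the word `only' in the proposition, and it rests on the scale invariance of $V$ together with the identification $V=\vol^{1/m}$ on the $d_{\Omega,K,a}=1$ slice. The second most delicate point is bookkeeping but error-prone: several algebraic solutions of $\partial V_p^2=0$ are spurious, either because they are non-real or because they violate the positivity constraints cutting out $\mathcal P_p(1)$, and the conclusion depends on discarding all of them correctly.
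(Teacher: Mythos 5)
Your proposal is correct and follows essentially the same route as the paper: compute the critical points of $V_p^2$ on the slice $\mathcal P_p(1)$ via the displayed factorizations of $\partial V_p^2/\partial a$ and $\partial V_p^2/\partial b$, discard the non-real families and those violating the positivity constraints $c,\ pa+c,\ pa+b+c,\ b+c>0$, and identify the survivors with the zero locus of the cKEM-Futaki invariant. Your first paragraph merely makes explicit the equivalence (criticality of the scale-invariant functional $\iff \mathfrak F^{T^2}_{\Omega,K,a}\equiv 0$) that the paper leaves implicit in the remark preceding the proposition and in Theorem \ref{main theorem}, which is a faithful and slightly more careful rendering of the same argument.
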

Our computation complements LeBrun's result in that there are no 
non-K\"ahler solution for $1 < p < 2$.
But this nonexistence result has been obtained by Apostolov-Maschler \cite{AM} by showing the non-vanishing
of the cKEM-Futaki invariant using computer-assisted calculation.

\subsection{The case of the one point blow up of $\boldsymbol{\mathbf{C}\mathbf P^2}$}
Let
$\Delta_p$ be the convex hull of $(0,0),(p,0),(p,1-p),(0,1),\ (0<p<1)$. 
An affine linear function 
$f = a\mu_1+b\mu_2+c$ is positive on $\Delta_p$
if and only if 
$$c,b+c,(1-p)b+pa+c,pa+c>0.$$ 
We put
\begin{align*}
&\mathcal P_p:=\{(a,b,c)\in \mathbf{R}^3| c,b+c,(1-p)b+pa+c,pa+c>0\},\\
&\mathcal P_p(1):=\mathcal P_p\cap \{(a,b,c)| (2-p)b+2pa+4c=1\}.
\end{align*}
For $(a,b,c)\in \mathcal P_p$, we define $s_p(a,b,c)$ and $v_p(a,b,c)$ by
\begin{eqnarray*}
&\ &\int_{\partial \Delta_p}\dfrac{d\sigma}{(a\mu_1+b\mu_2+c)^2}\\
&\ &=\frac{1}{c\left( b+c\right) }+\frac{p}{\left( b+c\right) \left((1-p) 
b+pa+c\right) }\\
&\ &\quad +\frac{1-p}{\left((1-p) b+pa+c\right) \left( pa+c\right) }+\frac{p}{\left( pa+c\right) c}\\
&\ &=\frac{\left( \left( 2a-b\right) c+{a}^{2}-{b}^{2}\right) {p}^{2}+\left( {c}^{2}+2ac+ab+{b}^{2}\right) p+2{c}^{2}+2bc}{c\left( b+c\right) \left( pa+c\right) \left( (1-p)b+pa+c\right) }\\
&\ &=\dfrac{s_p(a,b,c)}{c\left( b+c\right) \left( pa+c\right) \left( (1-p)b+pa+c\right) }
\end{eqnarray*}
\begin{eqnarray*}
&\ &\int_{\Delta_p}\dfrac{d\mu_1d\mu_2}{(a\mu_1+b\mu_2+c)^4}\\
&\ & =p\{ -a\left( a-b\right) \left( {c}^{2}-2ac+2bc-ab+{b}^{2}\right) {p}^{3}\\
&\ &\quad -2\left( 2ac-bc+ab\right) \left( {c}^{2}-2ac+2bc-ab+{b}^{2}\right)  {p}^{2}\\
&\ &\quad -\left( 3{c}^{4}-12a{c}^{3}+12b{c}^{3}-18ab{c}^{2}+12{b}^{2}{c}^{2}-8{a}b^2c
+4{b}^{3}c-{a}b^3\right) {p} \\
&\ &\quad + 2c\left( c+b\right) \left( 3{c}^{2}+3bc+{b}^{2}\right) )\}
/(6c^2\left( b+c\right) ^2\left( pa+c\right) ^2\left( (1-p)b+pa+c\right)^2) \\
&\ &=\dfrac{v_p(a,b,c)}{6c^2\left( b+c\right) ^2\left( pa+c\right) ^2\left( (1-p)b+pa+c\right)^2}
\end{eqnarray*}
Hence for $(a,b,c)\in\mathcal P_p(1)$, we have
\begin{align*}
&V_p(a,b) ^2\\
&:=
\dfrac{48\pi^2s_p(a,b,(1+(p-2)b-2pa)/4)^2}{v_p(a,b,(1+(p-2)b-2pa)/4)}\\
&=
96 \pi^2 (3{\left( 2a-b\right) }^{2}p^3-2( 4{a}^{2}-4ab+2a-5{b}^{2}-b)p^2
-(20b^2+1)p+8b^2-2 ) ^2\\
&/p({\left( 2a-b\right) }^{2}\left( 4{a}^{2}-4ab+5{b}^{2}\right)p^5\\
&\quad -2\left( 2a-b\right) \left( 8{a}^{3}-12a^2{b}+8{a}^{2}+6{a}b^2-8ab-{b}^{3}+6{b}^{2}\right) p^4\\
&\quad -2\left( 48{a}^{2}{b}^{2}-4{a}^{2}-48{a}b^3-16{a}b^2+4ab+4{b}^{4}+8{b}^{3}-3{b}^{2}\right) p^3\\
&\quad +4\left( 16{a}^{2}{b}^{2}-4{a}^{2}-16{a}b^3-8{a}b^2+4ab+2a-12{b}^{4}+4{b}^{3}-5{b}^{2}-b\right) p^2\\
&\quad +(80{b}^{4}+24{b}^{2}-3)p
-32b^4-16b^2+6 ).
\end{align*}
Then $\displaystyle{\dfrac{\partial V_p^2}{\partial a}=
\dfrac{\partial V_p^2}{\partial b}}=0$ only when
\begin{enumerate}
\item[(1)]\ $a=\frac{2\sqrt{-9{b}^{2}{p}^{3}+\left( 21{b}^{2}+1\right) {p}^{2}+\left( 1-16{b}^{2}\right) p+4{b}^{2}-1}+3b{p}^{2}+\left( 1-2b\right) p}{6{p}^{2}-4p}$,
\item[(2)]\ $a=-\frac{2\sqrt{-9{b}^{2}{p}^{3}+\left( 21{b}^{2}+1\right) {p}^{2}+\left( 1-16{b}^{2}\right) p+4{b}^{2}-1}-3b{p}^{2}+\left( 2b-1\right) p}{6{p}^{2}-4p}$,
\item[(3)]\ 
$a=-\frac{1}{3p^2-2p}\sqrt{
\frac{p^3-6p^2+4p}{p-2}}-\frac{1}{6p-4}\sqrt{\frac{5p-2}{p-2}}
+\frac{1}{6p-4},b=-\frac{1}{3p-2}\sqrt{\frac{5p-2}{p-2}}$,
\item[(4)]\ $a=\frac{1}{3p^2-2p}\sqrt{
\frac{p^3-6p^2+4p}{p-2}}-\frac{1}{6p-4}\sqrt{\frac{5p-2}{p-2}}
+\frac{1}{6p-4},b=-\frac{1}{3p-2}\sqrt{\frac{5p-2}{p-2}}$,
\item[(5)]\ $a=-\frac{1}{3p^2-2p}\sqrt{
\frac{p^3-6p^2+4p}{p-2}}+\frac{1}{6p-4}\sqrt{\frac{5p-2}{p-2}}
+\frac{1}{6p-4},b=\frac{1}{3p-2}\sqrt{\frac{5p-2}{p-2}}$,
\item[(6)]\ $a=\frac{1}{3p^2-2p}\sqrt{
\frac{p^3-6p^2+4p}{p-2}}+\frac{1}{6p-4}\sqrt{\frac{5p-2}{p-2}}
+\frac{1}{6p-4},b=\frac{1}{3p-2}\sqrt{\frac{5p-2}{p-2}}$,
\item[(7)]\ $a=\frac{p-1}{p^2},b=\frac{1}{p}$,
\item[(8)]\ $a=-\frac{1}{p^2},b=-\frac{1}{p}$,
\item[(9)]\ $a=-\frac{\sqrt{9{p}^{2}-8p}+p}{4{p}^{2}},b=0$,
\item[(10)]\ $a=\frac{\sqrt{9{p}^{2}-8p}-p}{4{p}^{2}},b=0$,
\item[(11)]\ $a=-\frac{1}{6p-4}\sqrt{\frac{p^2+p-1}{p-1}}+\frac{1}{6p-4},
b=-\frac{1}{3p-2}\sqrt{\frac{p^2+p-1}{p-1}}$,
\item[(12)]\ $a=\frac{1}{6p-4}\sqrt{\frac{p^2+p-1}{p-1}}+\frac{1}{6p-4},
b=\frac{1}{3p-2}\sqrt{\frac{p^2+p-1}{p-1}}$,
\item[(13)]\ $a=-\frac{\sqrt{{p}^{4}-4{p}^{3}+16{p}^{2}-16p+4}-{p}^{2}+4p-2}{2{p}^{3}-4{p}^{2}+12p-8},
b=-\frac{\sqrt{{p}^{4}-4{p}^{3}+16{p}^{2}-16p+4}}{{p}^{3}-2{p}^{2}+6p-4}$,
\item[(14)]\ $a=\frac{\sqrt{{p}^{4}-4{p}^{3}+16{p}^{2}-16p+4}+{p}^{2}-4p+2}{2{p}^{3}-4{p}^{2}+12p-8},
b=\frac{\sqrt{{p}^{4}-4{p}^{3}+16{p}^{2}-16p+4}}{{p}^{3}-2{p}^{2}+6p-4}$,
\item[(15)]\ $a=-\frac{-p+2\sqrt{1-p}+2}{2{p}^{2}},b=0$,
\item[(16)]\ $a=\frac{p+2\sqrt{1-p}-2}{2{p}^{2}},b=0.$
\end{enumerate}
Up to this point in this subsection, we used Maxima to derive the above conclusions.
Among the above, real solutions are the following:

$(1),\ (2),\ (7),\ (8),\ (15),\ (16)$, 

$(9),\ (10)$：when $1>p\ge 8/9$

$(11),\ (12)$：when $0<p<(\sqrt{5}-1)/2$

$(13),\ (14)$：when $0<p\le \alpha,\ \beta\le p<1$, where $0<\alpha<\beta <1$
are the real roots of 
$p^4-4p^3+16p^2-16p+4=0$.\\
Let us check the cases (7) - (16) whether $(a,b,c) \in \mathcal P_p(1)$ or not. 
The proof of each case is elementary.
We only give a detailed proof only for (11) and (13) for the reader's convenience.
We leave the cases (1) and (2) to later study. 

\begin{enumerate}
\item[(7)]：$(a,b,c)\not\in \mathcal P_p(1)$
since $c = \dfrac{1+(p-2)b-2pa}{4}=0$.

\item[(8)]：$(a,b,c)\not\in \mathcal P_p(1)$
since $b+c=0$.

\item[(9)]：$(a,b,c)\in \mathcal P_p(1)$
since 
$$c = b+c=\frac38+\frac{\sqrt{9p^2-8p}}{8p}>0$$ 
and 
$$(1-p)b+pa+c=pa+c=\frac18-\frac{\sqrt{9p^2-8p}}{8p}>0.$$

\item[(10)]：$(a,b,c)\in \mathcal P_p(1)$ since 
$$c = b+c=\frac38-\frac{\sqrt{9p^2-8p}}{8p}>0$$
and
$$(1-p)b+pa+c=pa+c=\frac18+\frac{\sqrt{9p^2-8p}}{8p}>0.$$

\item[(11)]：$(a,b,c)\not\in \mathcal P_p(1)$ 
\begin{proof}
First of all, recall $0<p<(\sqrt{5}-1)/2$. We need to check the signs of
$$c =\frac{\sqrt{\frac{{p}^{2}+p-1}{p-1}}+p-1}{6p-4},$$
$$b+c=-\frac{\sqrt{\frac{{p}^{2}+p-1}{p-1}}-p+1}{6p-4},$$
$$(1-p)b+pa+c=\frac{\left( p-1\right) \sqrt{\frac{{p}^{2}+p-1}{p-1}}+2p-1}{6p-4}$$
and 
$$pa+c=-\frac{\left( p-1\right) \sqrt{\frac{{p}^{2}+p-1}{p-1}}-2p+1}{6p-4}.$$
Since $4-6p > 0$, 
it is sufficient to prove that
\begin{equation}\label{32}
(4-6p)(pa+c)=(p-1)\sqrt{\dfrac{p^2+p-1}{p-1}}-2p+1<0.
\end{equation}
When $\frac12 \le p<\dfrac{\sqrt{5}-1}{2}$, \eqref{32} is trivial.
When $0<p<\frac12$,
\begin{equation*}
\begin{split}
(p-1)\sqrt{\dfrac{p^2+p-1}{p-1}}<2p-1
&\iff
\sqrt{\dfrac{p^2+p-1}{p-1}}>\dfrac{2p-1}{p-1}\\
&\iff
\dfrac{p^2+p-1}{p-1}>\dfrac{(2p-1)^2}{(p-1)^2}\\
&\iff
p^2-4p+2>0
\end{split}
\end{equation*}
This completes the proof.
\end{proof}

\item[(12)]：$(a,b,c)\not\in \mathcal P_p(1)$ since
$$c =-\frac{\sqrt{\frac{{p}^{2}+p-1}{p-1}}-p+1}{6p-4},$$
$$b+c=\frac{\sqrt{\frac{{p}^{2}+p-1}{p-1}}+p-1}{6p-4},$$
$$(1-p)b+pa+c=-\frac{\left( p-1\right) \sqrt{\frac{{p}^{2}+p-1}{p-1}}-2p+1}{6p-4} < 0$$
and
$$pa+c=\frac{\left( p-1\right) \sqrt{\frac{{p}^{2}+p-1}{p-1}}+2p-1}{6p-4}.$$

\item[(13)]：If $0<p\le \alpha$ then $(a,b,c)\in \mathcal P_p(1)$. If  
$\beta\le p<1$, then $(a,b,c)\not\in \mathcal P_p(1)$. This is because
\begin{equation*}
\begin{split}
&c=\dfrac{\sqrt{p^4-4p^3+16p^2-16p+4}+p^2+2p-2}{2(p^3-2p^2+6p-4)}\\
&b+c=-\dfrac{\sqrt{p^4-4p^3+16p^2-16p+4}-p^2-2p+2}{2(p^3-2p^2+6p-4)}\\
&(1-p)b+pa+c=\dfrac{(p-1)(\sqrt{p^4-4p^3+16p^2-16p+4}+p^2-2p+2)}{2(p^3-2p^2+6p-4)}\\
&pa+c=-\dfrac{(p-1)(\sqrt{p^4-4p^3+16p^2-16p+4}-p^2+2p-2)}{2(p^3-2p^2+6p-4)}
\end{split}
\end{equation*}
When $0<p\le \alpha$, then $c,b+c,(1-p)b+pa+c$ and $pa+c$ are positive.
On the other hand, when $\beta\le p<1$, we have $(1-p)b+pa+c<0$. 
\begin{proof}
For $0<p\le \alpha=0.386\cdots$,
$(a,b,c)\in \mathcal P_p(1)$

\begin{equation*}
\begin{split}
c
&=
\dfrac{\sqrt{p^4-4p^3+16p^2-16p+4}+p^2+2p-2}{2(p^3-2p^2+6p-4)}\\
b+c
&=
\dfrac{-\sqrt{p^4-4p^3+16p^2-16p+4}+p^2+2p-2}{2(p^3-2p^2+6p-4)}\\
(1-p)b+pa+c
&=(1-p)
\dfrac{-\sqrt{p^4-4p^3+16p^2-16p+4}-p^2+2p-2}{2(p^3-2p^2+6p-4)}\\
pa+c
&=
(1-p)
\dfrac{\sqrt{p^4-4p^3+16p^2-16p+4}-p^2+2p-2}{2(p^3-2p^2+6p-4)}\\
\end{split}
\end{equation*}
It is easy to see that
$p^3-2p^2+6p-4$ is negative on $(0,\alpha]$.
So, to prove $c,b+c,(1-p)b+pa+c, pa+c>0$,
it is sufficient to see that
\begin{equation*}
\begin{split}
A&:=
2(p^3-2p^2+6p-4)c\\
&=
\sqrt{p^4-4p^3+16p^2-16p+4}+p^2+2p-2<0\\
B&:=
2(p^3-2p^2+6p-4)(b+c)\\
&=
-\sqrt{p^4-4p^3+16p^2-16p+4}+p^2+2p-2<0\\
C&:=
\dfrac{2(p^3-2p^2+6p-4)((1-p)b+pa+c)}{1-p}\\
&=
-\sqrt{p^4-4p^3+16p^2-16p+4}-p^2+2p-2<0\\
D&:=
\dfrac{2(p^3-2p^2+6p-4)(pa+c)}{1-p}\\
&=
\sqrt{p^4-4p^3+16p^2-16p+4}-p^2+2p-2<0.
\end{split}
\end{equation*}
Since $A\ge B$ and $A\ge D\ge C$,
it sufficient to see $A<0$.
For $0<p\le \alpha$, $p^4-4p^3+16p^2-16p+4,-p^2-2p+2>0$.
Therefore
$$
A<0
\iff
p^4-4p^3+16p^2-16p+4<(-p^2-2p+2)^2
\iff
-8(p-1)^2p<0
$$
Thus we are done for $0<p\le \alpha$.

For $\beta=0.844\cdots \le p<1$, 
it is easy to see that $p^3-2p^2+6p-4$ and $p^2-2p+2$ are positive.
Hence
$$
(1-p)b+pa+c
=\dfrac{(p-1)(\sqrt{p^4-4p^3+16p^2-16p+4}+p^2-2p+2)}
{2(p^3-2p^2+6p-4)}<0.
$$
Thus we are done for $\beta \le p<1$.
\end{proof}

\item[(14)]：If $0<p\le \alpha$, then $(a,b,c)\in \mathcal P_p(1)$. 
If $\beta\le p<1$, then $(a,b,c)\not\in \mathcal P_p(1)$.
\begin{equation*}
\begin{split}
&c=-\dfrac{\sqrt{p^4-4p^3+16p^2-16p+4}-p^2-2p+2}{2(p^3-2p^2+6p-4)}\\
&b+c=\dfrac{\sqrt{p^4-4p^3+16p^2-16p+4}+p^2+2p-2}{2(p^3-2p^2+6p-4)}\\
&(1-p)b+pa+c=-\dfrac{(p-1)(\sqrt{p^4-4p^3+16p^2-16p+4}-p^2+2p-2)}{2(p^3-2p^2+6p-4)}\\
&pa+c=\dfrac{(p-1)(\sqrt{p^4-4p^3+16p^2-16p+4}+p^2-2p+2)}{2(p^3-2p^2+6p-4)}
\end{split}
\end{equation*}
When $0<p\le \alpha$ $\beta\le p<1$, $c,b+c,(1-p)b+pa+c$ and $pa+c$ are 
positive.
On the other hand, when $\beta\le p<1$, we have
$(1-p)b+pa+c<0$. 
\item[(15)]：$(a,b,c)\not\in \mathcal P_p(1)$
\begin{align*}
&c=b+c= \frac{\sqrt{1-p}+1}{2p},\\
&pa+c=(1-p)b+pa+c=-\frac{-p+\sqrt{1-p}+1}{2p}<0
\end{align*}
\item[(16)]：$(a,b,c)\in \mathcal P_p(1)$
\begin{align*}
&c=b+c=-\frac{\sqrt{1-p}-1}{2p}>0,\\
&(1-p)b+pa+c=pa+c=\frac{p+\sqrt{1-p}-1}{2p}>0.
\end{align*}
\end{enumerate}
We record the data of the cases (1), (2) for later study.\\
\vspace{4mm}
(1)：
\begin{align*}
&c=-\frac{\sqrt{-9{b}^{2}{p}^{3}+\left( 21{b}^{2}+1\right) {p}^{2}+\left( 1-16{b}^{2}\right) p+4{b}^{2}-1}+\left( 3b-1\right) p-2b+1}{6p-4}\\
&b+c=\\
&-\frac{\sqrt{-9{b}^{2}{p}^{3}+\left( 21{b}^{2}+1\right) {p}^{2}+\left( 1-16{b}^{2}\right) p+4{b}^{2}-1}+\left( -3b-1\right) p+2b+1}{6p-4}\\
&(1-p)b+pa+c=\\
&\frac{\sqrt{-9{b}^{2}{p}^{3}+\left( 21{b}^{2}+1\right) {p}^{2}+\left( 1-16{b}^{2}\right) p+4{b}^{2}-1}+\left( 3b+3\right) p-2b-1}{6p-4}\\
&pa+c=\\
&\frac{\sqrt{-9{b}^{2}{p}^{3}+\left( 21{b}^{2}+1\right) {p}^{2}+\left( 1-16{b}^{2}\right) p+4{b}^{2}-1}+6b{p}^{2}+\left( 3-7b\right) p+2b-1}{6p-4}
\end{align*}
(2)：
\begin{align*}
&c=\frac{\sqrt{-9{b}^{2}{p}^{3}+\left( 21{b}^{2}+1\right) {p}^{2}+\left( 1-16{b}^{2}\right) p+4{b}^{2}-1}+\left( 1-3b\right) p+2b-1}{6p-4}\\
&b+c=\\
&\frac{\sqrt{-9{b}^{2}{p}^{3}+\left( 21{b}^{2}+1\right) {p}^{2}+\left( 1-16{b}^{2}\right) p+4{b}^{2}-1}+\left( 3b+1\right) p-2b-1}{6p-4}\\
&(1-p)b+pa+c=\\
&-\frac{\sqrt{-9{b}^{2}{p}^{3}+\left( 21{b}^{2}+1\right) {p}^{2}+\left( 1-16{b}^{2}\right) p+4{b}^{2}-1}+3b{p}^{2}+\left( -5b-2\right) p+2b+1}{6p-4}\\
&pa+c=\\
&-\frac{\sqrt{-9{b}^{2}{p}^{3}+\left( 21{b}^{2}+1\right) {p}^{2}+\left( 1-16{b}^{2}\right) p+4{b}^{2}-1}-3b{p}^{2}+\left( 5b-2\right) p-2b+1}{6p-4}
\end{align*}
\vspace{10mm}

To sum up, leaving (1), (2) aside,
if $0<p<\alpha$ then the cKEM-Futaki invariant vanishes
for $(13),\ (14),$ and $(16)$.
If $\alpha\le p\le 8/9 $ then the cKEM-Futaki invariant 
vanishes only for $(16)$.
If $8/9<p<1$ then cKEM-Futaki invariant 
vanishes for $(9),\ (10)$ and $(16)$. We wish to compare this with the following
result of LeBrun.
\begin{thm}[LeBrun \cite{L2}]\label{LCMP16}
Let $M$ be the blow-up of $\mathbf C\mathbf P^2$ at one point.\\
$\mathrm{(a)}$\ For any K\"ahler class, there exists a K\"ahler metric which is conformal to a 
conformally K\"ahler, Einstein-Maxwell metric.\\
$\mathrm{(b)}$\ Express an arbitrary K\"ahler class as 
$\Omega = u\mathcal L - v\mathcal E$ where $\mathcal L$ and $\mathcal E$ are the Poincar\'e duals of a projective line and the exceptional divisor. 
If $9 < u/v $ then there are two K\"ahler metrics which are conformal to a 
conformally K\"ahler, Einstein-Maxwell metric. One of these metrics $g$ has two positive
potential functions $f$ of Hamiltonian Killing vector fields such that $\Tilde g = f^{-2}g$ is
a conformally K\"ahler, Einstein-Maxwell metric. 
Further there is an orientation reversing isometry between these two conformally K\"ahler, Einstein-Maxwell metrics $\Tilde g$.\\
All the K\"ahler metrics and positive
potential functions of Hamiltonian Killing vector fields in $\mathrm{(a)}$ and $\mathrm{(b)}$ are 
$U(2)$-invariant, and there are no other $U(2)$-invariant conformally K\"ahler, Einstein-Maxwell metrics.
\end{thm}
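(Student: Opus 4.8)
The plan is to reduce the entire statement to the $U(2)$-invariant critical point analysis already set up for $\Delta_p$, and then to upgrade the resulting necessary condition to genuine existence by a cohomogeneity-one ODE construction. First I would note that, in the momentum coordinates $(\mu_1,\mu_2)$ adapted to the $U(2)$-action on the one point blow up, a $U(2)$-invariant Killing potential is exactly an affine function with no $\mu_2$-dependence, i.e. $f=a\mu_1+c$; thus the $U(2)$-invariant members of $\mathcal P_p(1)$ form precisely the slice $\{b=0\}$. By Theorem \ref{main theorem}, if a $U(2)$-invariant cKEM metric $\Tilde{g}_{K,a}$ exists then $(K,a)$ must be a critical point of $V$, equivalently $\mathfrak F^{G}_{\Omega,K,a}\equiv 0$. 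Hence every such metric lies over one of the $b=0$ critical points of $V_p$, and the enumeration reduces to those.

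Next I would extract the $b=0$ critical points from the list computed above. Imposing $b=0$ on $\partial V_p^2/\partial a=\partial V_p^2/\partial b=0$ isolates the cases $(9),(10),(15),(16)$. The positivity test $(a,b,c)\in\mathcal P_p(1)$ performed earlier then discards $(15)$, retains $(16)$ for every $0<p<1$, and retains $(9),(10)$ exactly when $p\ge 8/9$. Under the correspondence between $p$ and the class ratio $u/v$ (with $p\to 1$ as $u/v\to\infty$), the threshold $p=8/9$ becomes $u/v=9$. This already reproduces the count asserted in the theorem: one admissible potential when $u/v\le 9$ and three when $u/v>9$.

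To finish I would pass from this necessary condition to actual metrics and read off the geometry. For each surviving critical point I would run the construction of Section 3: write $g$ in action-angle form with profile $\Psi$, solve the linear ODE enforcing $s_{\Tilde{g}}=\mathrm{const}$, impose the smooth-closure boundary conditions at the two ends of the momentum interval, and verify $\Psi>0$ and $f>0$ by a positivity lemma of the type proved there. The solutions $(9)$ and $(10)$ are interchanged by the reflection of $\Delta_p$ that swaps its two $U(2)$-fixed facets; since this reflection is an isometry of the underlying (reflection-symmetric) K\"ahler metric $g_0$, the two cKEM metrics it produces are conformal to the single metric $g_0$ through two distinct positive potentials, and the reflection descends to an orientation-reversing isometry between them. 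The solution $(16)$ is conformal to a second, genuinely different K\"ahler metric, and it persists for all $0<p<1$, which gives part (a).

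The main obstacle will be the sufficiency step, not the enumeration. Theorem \ref{main theorem} supplies the vanishing of $\mathfrak F^{G}_{\Omega,K,a}$ only as a necessary condition, so turning each $U(2)$-invariant critical point into an honest smooth cKEM metric requires solving the ODE boundary value problem and controlling positivity as the generic orbit collapses at the two ends; this is exactly where a Lemma-type positivity argument is indispensable. A secondary difficulty is the completeness claim---that there are no other $U(2)$-invariant cKEM metrics---which rests on knowing that the $U(2)$-invariant reduction is precisely the slice $b=0$ and that the critical point list $(1)$--$(16)$ is exhaustive.
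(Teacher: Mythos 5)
The theorem you are trying to prove is not proved in this paper at all: it is quoted verbatim from LeBrun \cite{L2} and used as external input for Theorem \ref{OPB}, which only matches the affine functions $(9)$, $(10)$, $(16)$ in the authors' critical-point list to LeBrun's metrics. So there is no ``paper's own proof'' to compare against, and the real question is whether your strategy could independently establish LeBrun's result. It cannot, for two concrete reasons. First, the sufficiency step fails: Theorem \ref{main theorem} gives the vanishing of $\mathfrak F^{G}_{\Omega,K,a}$ only as a \emph{necessary} condition, and the paper itself is explicit that it cannot upgrade critical points to metrics this way --- it leaves the critical points $(13)$, $(14)$ (and $(1)$, $(2)$) as open existence problems. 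Your plan to ``run the construction of Section 3'' does not apply here: that construction is a product ansatz $g=g_1+g_2$ on $\mathbf C\mathbf P^1\times M$, whereas the one-point blow-up of $\mathbf C\mathbf P^2$ is a nontrivial $\mathbf C\mathbf P^1$-bundle; the ODE \eqref{eq:5.2} and its boundary conditions are derived under the assumption that the Hessian of $t$ is trivial in the $M$-directions, which is false on the blow-up. LeBrun's actual proof rests on a different explicit construction (a Calabi-type/ambitoric ansatz adapted to the nontrivial bundle) together with a separate classification argument for $U(2)$-invariant solutions.

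Second, even the enumeration step is incomplete as you state it. Setting $b=0$ does not ``isolate the cases $(9),(10),(15),(16)$'': the solutions $(1)$ and $(2)$ are one-parameter families in $b$ and also contain $b=0$ points, and the proof of Theorem \ref{OPB} has to do genuine work to show that $(1)$ and $(2)$ with $b=0$ violate positivity on $\Delta_p$ before concluding that $U(2)$-symmetry forces $(9)$, $(10)$ or $(16)$. Your mechanism for the orientation-reversing isometry is also unsupported: $\Delta_p$ is a trapezoid with edges of different lengths, so there is no polytope reflection inducing an isometry of the K\"ahler metric; what the paper actually verifies is only that $(9)$ and $(10)$ give the same value $5-\frac{2}{p}$ of the scale-invariant functional, i.e.\ the solutions are homothetic, while explicitly noting they are \emph{not} isometric as K\"ahler metrics --- the orientation-reversing isometry between the conformal metrics $\Tilde g$ is a statement proved by LeBrun by other means.
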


\begin{thm}\label{OPB} Let $M$ be the blow-up of $\mathbf C\mathbf P^2$ at one point, 
$\Delta_p$ the convex hull of $(0,0),(p,0),(p,1-p),(0,1),\ (0<p<1)$ in $(\mu_1,\mu_2)$-plane,
and consider $\Delta_p$ as the moment map image of $M$. Let $0<\alpha<\beta <1$
be the real roots of 
$$p^4-4p^3+16p^2-16p+4=0.$$ 
\noindent
$\mathrm{(a)}$\ For $0 < p < 1$, the affine function 
$$ f = \frac{p+2\sqrt{1-p}-2}{2{p}^{2}} \mu_1 -\frac{\sqrt{1-p}-1}{2p} $$
corresponds to the conformally K\"ahler, Einstein-Maxwell metric in $\mathrm{(a)}$
in Theorem \ref{LCMP16}.\\
$\mathrm{(b)}$\  For $\frac89 < p < 1$, the two affine functions 
$$ f = -\frac{\sqrt{9{p}^{2}-8p}+p}{4{p}^{2}} \mu_1 + \frac38+\frac{\sqrt{9p^2-8p}}{8p},$$
$$ f = \frac{\sqrt{9{p}^{2}-8p}-p}{4{p}^{2}}\ \mu_1 + \frac38-\frac{\sqrt{9p^2-8p}}{8p}$$
correspond to the conformally K\"ahler, Einstein-Maxwell metric in $\mathrm{(b)}$
in Theorem \ref{LCMP16}.\\
$\mathrm{(c)}$\ For $0 < p < \alpha$, the two affine functions
\begin{eqnarray*}
f &=& -\frac{\sqrt{{p}^{4}-4{p}^{3}+16{p}^{2}-16p+4}-{p}^{2}+4p-2}{2{p}^{3}-4{p}^{2}+12p-8}\mu_1\\
&&- \frac{\sqrt{{p}^{4}-4{p}^{3}+16{p}^{2}-16p+4}}{{p}^{3}-2{p}^{2}+6p-4}\mu_2\\
&&+ \dfrac{\sqrt{p^4-4p^3+16p^2-16p+4}+p^2+2p-2}{2(p^3-2p^2+6p-4)},
\end{eqnarray*}
\begin{eqnarray*}
f &=& \frac{\sqrt{{p}^{4}-4{p}^{3}+16{p}^{2}-16p+4}+{p}^{2}-4p+2}{2{p}^{3}-4{p}^{2}+12p-8}\mu_1\\
&&+ \frac{\sqrt{{p}^{4}-4{p}^{3}+16{p}^{2}-16p+4}}{{p}^{3}-2{p}^{2}+6p-4} \mu_2 \\
&&- \dfrac{\sqrt{p^4-4p^3+16p^2-16p+4}-p^2-2p+2}{2(p^3-2p^2+6p-4)}
\end{eqnarray*}
are positive and satisfy $Fut_f = 0$. If there is a K\"ahler metric $g$ such that $f^{-2}g$ gives
a conformally K\"ahler, Einstein-Maxwell metric then it has $U(1)\times U(1)$-symmetry.
\end{thm}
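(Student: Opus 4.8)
The plan is to read off all three parts from the critical-point analysis of $\Delta_p$ performed above, combined with Theorem \ref{main theorem}; the genuine work (writing $V_p(a,b)$, solving $\partial_a V_p^2=\partial_b V_p^2=0$ to produce the list $(1)$--$(16)$, and deciding membership in $\mathcal P_p(1)$) is already done. By the toric formula \eqref{eq:6.3} and the scale invariance of $V$, a pair $(a,b)$ whose slice triple $(a,b,c)$ lies in $\mathcal P_p(1)$ is a critical point of $V_p$ exactly when the corresponding $(K,a)$ is critical for $V$ on $\widetilde{\mathcal P}^{T^2}_\Omega$, which by Theorem \ref{main theorem} holds if and only if the cKEM-Futaki invariant $\mathfrak F^{T^2}_{\Omega,K,a}$ vanishes, i.e. $Fut_f=0$. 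Thus every affine $f$ coming from a critical point in $\mathcal P_p(1)$ is automatically positive on $\Delta_p$ and satisfies $Fut_f=0$; this is exactly the content of (c) once the relevant points are named.

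The identification with LeBrun's metrics in (a) and (b) rests on two facts. First, $U(2)$-invariance of a Killing potential forces $b=0$. Indeed $\Delta_p$ is the unit simplex with the corner at $(1,0)$ truncated by $\{\mu_1=p\}$, so the blown-up point is $[0:1:0]$, fixed by the $U(2)$ acting on the remaining homogeneous coordinates; the centre of that $U(2)$ acts through the circle whose Hamiltonian is proportional to $\mu_1$, and a $U(2)$-invariant Hamiltonian Killing field, being $\mathrm{Ad}$-invariant, must lie in this centre, so its potential depends on $\mu_1$ alone. Among the listed critical points those with $b=0$ are $(9),(10),(15),(16)$, and of these only $(9),(10),(16)$ lie in $\mathcal P_p(1)$, with $(16)$ valid for all $0<p<1$ and $(9),(10)$ valid for $8/9\le p<1$. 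Second, the truncation yields the dictionary $u/v=1/(1-p)$ for $\Omega=u\mathcal L-v\mathcal E$, so LeBrun's threshold $9<u/v$ becomes $8/9<p<1$.

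With these in hand the matching is forced. By Theorem \ref{LCMP16}(a) there is, for every $0<p<1$, a $U(2)$-invariant cKEM metric whose potential is, by Theorem \ref{main theorem}, a $b=0$ critical point in $\mathcal P_p(1)$; since $(16)$ is the unique such point for $0<p<8/9$ and LeBrun's (a)-family depends algebraically on $p$, it coincides with $(16)$ for all $0<p<1$, proving (a). By Theorem \ref{LCMP16}(b), for $9<u/v$, i.e. $8/9<p<1$, there are two further $U(2)$-invariant cKEM metrics, which must be the remaining valid $b=0$ points $(9),(10)$; consistently these split off $(16)$ at $p=8/9$, where $9p^2-8p=0$ and $(9),(10),(16)$ coincide. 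This gives (b). Finally, for (c) the two displayed functions are precisely $(13),(14)$, which exist for $0<p<\alpha$ with $\alpha,\beta$ the roots of $p^4-4p^3+16p^2-16p+4=0$; they lie in $\mathcal P_p(1)$ and hence give $Fut_f=0$, and because their $b$-coefficient is nonzero any $g\in\mathcal K^{T^2}_\Omega$ with $f^{-2}g$ cKEM yields a $U(1)\times U(1)$-invariant metric whose symmetry does not enhance to $U(2)$.

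The main obstacle is the bookkeeping of the matching rather than any new analytic estimate: one has to isolate the $U(2)$-invariant potentials correctly as the family $b=0$, verify that the dictionary $u/v=1/(1-p)$ places LeBrun's threshold exactly at $p=8/9$, and confirm that $(9),(10),(16)$ are the only valid $U(2)$-invariant critical points, so that the existence statements of Theorem \ref{LCMP16} have no option but to land on the displayed functions. The remaining candidates $(1),(2)$, which generally have $b\neq0$, are precisely the non-$U(2)$-invariant cases left for separate study, so they do not interfere with (a)--(c).
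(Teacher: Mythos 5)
Your overall strategy—use Theorem \ref{main theorem} to convert the critical-point list into vanishing of the cKEM-Futaki invariant, identify $U(2)$-invariance with $b=0$, translate the polytope into the class $\Omega=u\mathcal L-v\mathcal E$ with $u/v=1/(1-p)$, and then match against LeBrun's exhaustive classification—is the same as the paper's. But there is one genuine gap. You assert that ``among the listed critical points those with $b=0$ are $(9),(10),(15),(16)$'' and dismiss $(1),(2)$ as ``generally $b\neq 0$.'' The cases $(1)$ and $(2)$ are not isolated points: they are one-parameter families giving $a$ as a function of $b$, and each contains a point with $b=0$. If either of those $b=0$ specializations landed in $\mathcal P_p(1)$, it would be an additional $U(2)$-invariant critical point, and your counting argument matching $\{(9),(10),(16)\}$ bijectively with LeBrun's solutions would collapse. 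The paper closes exactly this hole with an explicit computation: for $(1)$ at $b=0$ one gets $a=\bigl(2\sqrt{p^2+p-1}+p\bigr)/(6p^2-4p)$ and shows that $c$ or $pa+c$ is negative (via the sign of $(pa+c)/c$ on $(\sqrt5-1)/2<p<1$), and similarly for $(2)$ at $b=0$ one gets $pa+c<0$. Without this step the conclusion that $U(2)$-symmetry occurs exactly in cases $(9),(10),(16)$ is not established.

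A secondary difference: for part (b) you identify $(9),(10)$ with LeBrun's pair by a counting-plus-continuity argument (the (a)-family must stay on branch $(16)$ as $p$ crosses $8/9$). This is delicate precisely because all three branches coincide at $p=8/9$, so continuity alone does not rule out the (a)-family switching branches there. The paper instead computes $f(a,p)=V_p(a,0)^2/(96\pi^2)$ and shows that the values at $(9)$ and $(10)$ are both equal to $5-2/p$ (so these two solutions are homothetic, consistent with LeBrun's orientation-reversing isometry between the two cKEM metrics in (b)), while the value at $(16)$ is different; this pins down the correspondence directly. You should either adopt that computation or strengthen the continuity argument. The paper also invokes the uniqueness of the toric K\"ahler metric $g$ with $f^{-2}g$ cKEM for a given $f$ (Theorem 3 of Apostolov--Maschler) to make the correspondence one-to-one; this is worth citing explicitly. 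Your treatment of (a), (c) and of the dictionary $u/v=1/(1-p)$ otherwise agrees with the paper.
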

\begin{proof}
Recall that, in the classification (1) - (16), the cases with $Fut_f = 0$ are
the cases (1), (2), (9), (10), (13), (14) and (16). 
A toric cKEM metric in these cases, if any, has $U(2)$-symmetry if and only if
$b=0$. 

We see that (1) with $b=0$ or (2) with $b=0$ do not occur.
In fact, if (1) with $b=0$ occurs then
$$ a = \frac{2\sqrt{{p}^{2}+p-1}+ p}{6{p}^{2}-4p}, \qquad 4c = \frac{2p-2 - \sqrt{2p^2+p-1}}{3p-2}.$$
We have to have ${p}^{2}+p-1>0$, and thus we have only to consider the case
$(\sqrt{5} -1)/2 < p < 1$.
In this range, we have 
$$
\dfrac{pa+c}{c}=
\dfrac{1+2pa}{1-2pa}
=
\dfrac{-\sqrt{p^2+p-1}-2p+1}{\sqrt{p^2+p-1}+1-p}<0.
$$
Hence $c$ or $pa+c$ is negative.
So $f(0,0) < 0$ or $f(p,0) < 0$.
If (2) with $b=0$ occurs then 
$$ a = - \frac{2\sqrt{{p}^{2}+p-1}-p}{6{p}^{2}-4p}, \qquad 
pa + c = \frac{-\sqrt{{p}^{2}+p-1}+2p-1}{6p-4}.$$
For $(\sqrt{5} -1)/2 < p < 1$ we have $pa + c  < 0$ since the numerator and denominator
both change sign at $p=2/3$ and $pa + c = -3/4$ at $p = 2/3$. So $f(p,0) < 0$.

It follows that $U(2)$-symmetry occurs exactly when (9), (10), (16) 
because we have $b=0$ in the cases (9), (10) and (16).

The moment map image $\Delta_p$ determines the K\"ahler class 
$\Omega = u\mathcal L - v \mathcal E$ with $u = 1$ and $v=1-p$.
Thus $u/v \le 9$ if and only if $p \le 8/9$. 
In this region, only the case (16) allows an $f$ with vanishing cKEM-Futaki invariant, and in fact
Theorem \ref{LCMP16} shows there is one cKEM metric with $U(2)$-symmetry.
Moreover by Theorem 3 in \cite{AM}, for a given $f$, a toric K\"ahler metric $g$ such that
$f^{-2}g$ is a cKEM metric is unique. Thus (a) holds.

In the region $u/v > 9$, that is, $p > 8/9$, the case (9), (10) and (16) gives an $f$ with 
vanishing cKEM-Futaki invariant. By the similar arguments as in the case of $p \le 8/9$,
these three cases correspond to the three LeBrun solutions in Theorem \ref{LCMP16}
cited above. Moreover the cases (9) and (10) correspond to (b) in Theorem \ref{LCMP16}, which can
be checked by computing $V_p(a,0)^2$ in (\ref{eq:6.6}). 
Put 
\begin{eqnarray*}
f(a,p)&:=&\dfrac{V_p(a,0)^2}{96\pi^2}\\
&=&\frac{(12a^2p^3-4a(2a+1)p^2-p-2)^2}{p(16a^4p^5-32a^3(a+1)p^4
+8a^2p^3+8a(1-2a)p^2-3p+6)}.
\end{eqnarray*}
Then we have 
$$f(-\frac{\sqrt{9p^2-8p}+p}{4p^2},p)=
f(\frac{\sqrt{9p^2-8p}-p}{4p^2},p)=5-\frac{2}{p},$$
which shows the solutions corresponding to (9) and (10) are homothetic.
(These are not isometric since the total scalar curvature and the volume have different 
values.)
But for the case (16) the value of $f(a,p)$ is not equal to the cases (9) and (10) because
\begin{eqnarray*}
&&f(\frac{p+2\sqrt{1-p}-2}{2p^2},p)\\
&&=
-\frac{4{p}^{4}+\sqrt{1-p}\left( 24{p}^{3}-112{p}^{2}+112p-32\right) -68{p}^{3}+164{p}^{2}-128p+32}{{p}^{4}+6{p}^{3}+\sqrt{1-p}\left( 16{p}^{2}-16p\right) -24{p}^{2}+16p}
\end{eqnarray*}
and
\begin{eqnarray*}
&&f(-\frac{\sqrt{9p^2-8p}+p}{4p^2},p)-
f(\frac{p+2\sqrt{1-p}-2}{2p^2},p)\\
&&=\frac{9{p}^{3}+\sqrt{1-p}\left( 24{p}^{2}-32p\right) -40{p}^{2}+32p}{{p}^{3}+6{p}^{2}+\sqrt{1-p}\left( 16p-16\right) -24p+16}.
\end{eqnarray*}
Hence we have proved (b).

The statement (c) is the possibility of the cases (13) and (14).
This completes the proof of Theorem \ref{OPB}.
\end{proof}

We have not been able to construct a cKEM metric for the cases (13) and (14).
There is an ansatz to construct local and global ambitoric 
solutions, see \cite{ACG16}, \cite{ACG15}, \cite{AM}.
We have not been able to rule out the cases (1) and (2).
We leave these problems to the interested readers.

\subsection{Hirzebruch surfaces}
Let
$\Delta_{p,q}$ be the convex hull of $(0,0),(p,0),(p,(1-p)q),(0,q),
\ (0<p<1, q\in \mathbf{N})$. 
An affine linear function
$f = a\mu_1+b\mu_2+c$ is positive on
$\Delta_{p,q}$ if and only if $c,qb+c,(1-p)qb+pa+c,pa+c>0$. 
We put
\begin{align*}
&\mathcal P_{p,q}:=\{(a,b,c)\in \mathbf{R}^3| c,qb+c,(1-p)qb+pa+c,pa+c>0\},\\
&\mathcal P_{p,q}(1):=\mathcal P_p\cap \{(a,b,c)| (2-p)qb+2pa+4c=1\}.
\end{align*}
For $(a,b,c)\in \mathcal P_{p,q}$, we have
\begin{align*}
&\int_{\partial \Delta_{p,q}}\dfrac{d\sigma}{(a\mu_1+b\mu_2+c)^2}\\
&=\frac{q}{c\left( qb+c\right) }+\frac{p}{\left( qb+c\right) \left( (1-p)qb+pa+c
\right) }\\
&+
\frac{\left( 1-p\right) q}{\left( pa+c\right) \left( (1-p)qb+pa+c\right) }
+
\frac{p}{c\left( pa+c\right) }
\\
&=-\{(\left( ab+{b}^{2}\right) {q}^{2}+\left( bc-{a}^{2}-ab\right) q-2ac)p^2\\
&+(\left( 2bc-ab-{b}^{2}\right) {q}^{2}+\left( {c}^{2}-2\left( a+b\right) c\right) q-2{c}^{2})p
-2cq\left( qb+c\right) \}\\
&/\{\left( pa+c\right) \left( \left( 1-p\right) qb+pa+c\right) \left( qb+c\right) c\}\\
&=\dfrac{s_{p,q}(a,b,c)}{\left( pa+c\right) \left( \left( 1-p\right) qb+pa+c\right) \left( qb+c\right) c}
\end{align*}
\begin{align*}
&\int_{\Delta_p}\dfrac{d\mu_1d\mu_2}{(a\mu_1+b\mu_2+c)^4}\\
&=\{pq({a}b^3{p}^{3}{q}^{3}+2{b}^{3}c{p}^{2}{q}^{3}-2{a}b^3{p}^{2}{q}^{3} 
-4 {b}^{3}cp{q}^{3}+{a}b^3p{q}^{3}+2{b}^{3}c{q}^{3}\\
&+2{a}b^2c{p}^{3}{q}^{2} -2 {a}^{2}{b}^{2}{p}^{3}{q}^{2}+4{b}^{2}{c}^{2}{p}^{2}{q}^{2}
-10 {a}b^2c{p}^{2}{q}^{2}+2{a}^{2}{b}^{2}{p}^{2}{q}^{2}\\
&-12 {b}^{2}{c}^{2}p{q}^{2}+8{a}b^2cp{q}^{2}+8{b}^{2}{c}^{2}{q}^{2}
+ab{c}^{2}{p}^{3}q-4a^2{b}c{p}^{3}q+a^3{b}{p}^{3}q\\
&+2b{c}^{3}{p}^{2}q-14ab{c}^{2}{p}^{2}q+8a^2{b}c{p}^{2}q 
-12 b{c}^{3}pq+18ab{c}^{2}pq+12b{c}^{3}q\\
&-{a}^{2}{c}^{2}{p}^{3}+2{a}^{3}c{p}^{3} -4 a{c}^{3}{p}^{2}
+8{a}^{2}{c}^{2}{p}^{2} -3 {c}^{4}p+12a{c}^{3}p+6c^{4})\}\\
&/\{6{c}^{2}{\left( pa+c\right) }^{2}{\left( qb+c\right) }^{2}{\left( (1-p)qb+pa+c\right) }^{2}\}\\
&=
\dfrac{v_{p,q}(a,b,c)}{6{c}^{2}{\left( pa+c\right) }^{2}{\left( qb+c\right) }^{2}{\left( (1-p)qb+pa+c\right) }^{2}}
\end{align*}
Hence for $(a,b,c)\in \mathcal P_{p,q}(1)$, we have
\begin{align*}
&V_{p,q}(a,b)^2&\\
&:=
\dfrac{48\pi^2s_{p,q}(a,b,(1+(p-2)qb-2pa)/4)^2}{v_{p,q}(a,b,(1+(p-2)qb-2pa)/4)}\\
&=
\{6({b}^{2}{p}^{3}{q}^{3}+2{b}^{2}{p}^{2}{q}^{3} -12{b}^{2}p{q}^{3}+8{b}^{2}{q}^{3} -4
ab{p}^{3}{q}^{2}+2{b}^{2}{p}^{3}{q}^{2}\\
&+8ab{p}^{2}{q}^{2}+8{b}^{2}{p}^{2}{q}^{2}+2b{p}^{2}{q}^{2} -8 {b}^{2}p{q}^{2}
+4a^{2}{p}^{3}q-8ab{p}^{3}q\\
&-8{a}^{2}{p}^{2}q-4 a{p}^{2}q+pq-2q+8{a}^{2}{p}^{3}-2p)^2\}\\
&/
\{
pq
(
5{b}^{4}{p}^{5}{q}^{4}-2{b}^{4}{p}^{4}{q}^{4} -8{b}^{4}{p}^{3}{q}^{4}
-48{b}^{4}{p}^{2}{q}^{4}+80{b}^{4}p{q}^{4}-32 {b}^{4}{q}^{4}\\
&-24 {a}b^3{p}^{5}{q}^{3}+16{a}b^3{p}^{4}{q}^{3}+12{b}^{3}{p}^{4}{q}^{3}
+96{a}b^3{p}^{3}{q}^{3}-16{b}^{3}{p}^{3}{q}^{3}-64{a}b^3{p}^{2}{q}^{3}\\
&+16{b}^{3}{p}^{2}{q}^{3}+40{a}^{2}{b}^{2}{p}^{5}{q}^{2}-48 {a}^{2}{b}^{2}{p}^{4}{q}^{2}
-40{a}b^2{p}^{4}{q}^{2}-96{a}^{2}{b}^{2}{p}^{3}{q}^{2}+32{a}b^2{p}^{3}{q}^{2}\\
&+6{b}^{2}{p}^{3}{q}^{2}+64{a}^{2}{b}^{2}{p}^{2}{q}^{2}-32{a}b^2{p}^{2}{q}^{2}
-20{b}^{2}{p}^{2}{q}^{2}+24{b}^{2}p{q}^{2}-16 {b}^{2}{q}^{2}\\
&-32a^3{b}{p}^{5}q+64a^3{b}{p}^{4}q+48a^2{b}{p}^{4}q
-8 ab{p}^{3}q+16ab{p}^{2}q-4b{p}^{2}q+16{a}^{4}{p}^{5}\\
&-32 {a}^{4}{p}^{4}-32{a}^{3}{p}^{4}+8{a}^{2}{p}^{3}
-16 {a}^{2}{p}^{2}+8a{p}^{2}-3 p+6
)
\}
\end{align*}

For example, the following
are real solutions of
$\displaystyle{
\dfrac{\partial V_{p,q}^2}{\partial a}=
\dfrac{\partial V_{p,q}^2}{\partial b}=0}$:

\begin{enumerate}
\item[(1)]\ $[a=\frac{p+2\sqrt{1-p}-2}{2{p}^{2}},b=0]$,
\item[(2)]\ $[a=\frac{\pm\sqrt{p(pq^2+4q(p-2)-4p)}-pq}{4{p}^{2}},
b=0],$
\item[(3)]\ $[a=-\frac{\sqrt{4(1-p)^2q^2-4(p-1)(p-2)pq+p^4}-2\left( p-1\right) q+p(p-2)}{2((2(p-1)(p-2)q-p^3))},\\
\ \ b=-\frac{\sqrt{4(1-p)^2q^2-4(p-1)(p-2)pq+p^4}}{q(2(p-1)(p-2)q-p^3)}],$
\item[(4)]\ $[a=\frac{\sqrt{4(1-p)^2q^2-4(p-1)(p-2)pq+p^4}+2( p-1) q-p(p-2)}{2((2(p-1)(p-2)q-p^3))},\\
\ \ b=\frac{\sqrt{4(1-p)^2q^2-4(p-1)(p-2)pq+p^4}}{q((2(p-1)(p-2)q-p^3))}]$
\end{enumerate}

\vspace{4mm}

Let us check these four cases.

\vspace{4mm}

 $(1)\ [a=\frac{p+2\sqrt{1-p}-2}{2{p}^{2}},b=0]$：$(a,b,c)\in \mathcal P_{p,q}(1)$

\vspace{2mm}

In this case, $c=qb+c,pa+c=(1-p)qb+pa+c$ are independent of 
$q$. Hence these are positive by the computation when $q=1$.

\vspace{4mm}

$(2)\ [a=\frac{\pm\sqrt{p(pq^2+4q(p-2)-4p)}-pq}{4{p}^{2}},b=0]$：$(a,b,c)\not\in \mathcal P_{p,q}(1)$

\vspace{2mm}

When $q=2$, these are not real solutions. When $q\ge 3$
and 
$$pq^2+4q(p-2)-4p>0,$$
we have
\begin{equation*}
pa+c=
\frac{(2-q)p
\pm\sqrt{p(pq^2+4q(p-2)-4p)}}{8p}<0.
\end{equation*}


\begin{align*}
&(3)\ [a=-\frac{\sqrt{4(1-p)^2q^2-4(p-1)(p-2)pq+p^4}-2\left( p-1\right) q+p(p-2)}{2((2(p-1)(p-2)q-p^3))},\\
&\ \ \ \ \ \ b=-\frac{\sqrt{4(1-p)^2q^2-4(p-1)(p-2)pq+p^4}}{q(2(p-1)(p-2)q-p^3)}],\\
&(4)\ [a=\frac{\sqrt{4(1-p)^2q^2-4(p-1)(p-2)pq+p^4}+2( p-1) q-p(p-2)}{2((2(p-1)(p-2)q-p^3))},\\
&\ \ \ \ \ \ b=\frac{\sqrt{4(1-p)^2q^2-4(p-1)(p-2)pq+p^4}}{q((2(p-1)(p-2)q-p^3))}]
\end{align*}

\vspace{3mm}

Take $q=2,3,4$ and perform a numerical analysis, then we see
that there are two roots $0<\alpha_q<\beta_q<1$ of the quartic equation in $p$:
$$4(1-p)^2q^2-4(p-1)(p-2)pq+p^4 = 0,$$
and that for $0<p<\alpha_q$, we have $(a,b,c)\in \mathcal P_{p,q}(1)$ so that
the cKEM-Futaki invariant vanishes. 

We conclude this section with the following two remarks.
\begin{rem}
It is likely that the the case (1) corresponds to LeBrun's construction in \cite{L2}, Theorem D with
$k \ge 2$. This case should be the only case with $U(2)$-symmetry. 
We may prove it by showing $b = 0$ occurs only in the case (1).
\end{rem}
\begin{rem}
It would be interesting if one can prove or disprove the existence of cKEM metrics in the cases of (3) and (4) with $0<p<\alpha_q$ since, if any, the solutions necessarily have
$U(1)\times U(1)$-symmetry.
\end{rem}

\end{document}